\newcommand{\abs}[1]{{\left|#1\right|}}
\newcommand{\norma}[1]{{\left\Vert#1\right\Vert}}
\newcommand{\dH}{d\mathcal{H}^{n-1}}
\def\XXint#1#2#3{{\setbox0=\hbox{$#1{#2#3}{\int}$}
    \vcenter{\hbox{$#2#3$}}\kern-.5\wd0}}
\theoremstyle{definition}
\newtheorem{definizione}{Definition}[section]
\theoremstyle{plain}
\newtheorem{teorema}{Theorem}[section]
\newtheorem{lemma}[teorema]{Lemma}
\newtheorem{prop}[teorema]{Proposition}
\newtheorem{corollario}[teorema]{Corollary}
\theoremstyle{definition}
\newtheorem{esempio}{Example}[section]
\newtheorem{oss}[esempio]{Remark}
\newtheorem*{open*}{Open problems}
\DeclareMathOperator{\R}{\mathbb{R}}
\newcommand{\myfootnote}[2]{\begingroup
	\def\@makefnmark{}%
	\addtocounter{footnote}{-1}%
	\footnote{\textbf{#1} #2}
	\endgroup}
\newlist{steps}{enumerate}{1}
\setlist[steps, 1]{label = Step \arabic*:}
\title{Talenti comparison results for solutions to $p$-Laplace equation on multiply connected domains}
\author{Luca Barbato, Francesco Salerno*}
\date{}
\newcommand{\Addresses}{{
\bigskip 
  
   \medskip

    \textit{E-mail address}, L.~Barbato: \texttt{l.barbato@ssmeridionale.it} 
  
   \medskip 
   
     \textit{E-mail address}, F.~Salerno* (corresponding author): \texttt{f.salerno@ssmeridionale.it} 
   \medskip

 \textsc{Mathematical and Physical Sciences for Advanced Materials and Technologies, Scuola Superiore Meridionale, Largo San Marcellino 10, 80138 Napoli, Italy.}

 \par\nopagebreak 

}} 
\begin{document}
\maketitle

\begin{abstract}

In the last years comparison results of Talenti type for Elliptic Problems have been widely investigated. In this paper we obtain a comparison result for solutions to the $p$-Laplace equation in multiply connected domains with Robin boundary condition on the exterior boundary and non-homogeneous Dirichlet boundary conditions on the interior one, generalizing the results obtained in \cite{ANT, AGM} to this type of domains. This will be a generalization to Robin boundary condition of the results obtained in \cite{B, B2}, with an improvement of the $L^2$ comparison in the case $p=2$. As a consequence, we obtain a Bossel-Daners and Saint-Venant type inequalities for multiply connected domains.  
\newline
\newline
\textsc{Keywords:} $p$-Laplace equation, multiply connected domains, Robin boundary conditions, Talenti comparison results.  \\
\textsc{MSC 2020:}  35J92, 35B51, 35P30
\end{abstract}
\section{Introduction}
Let $\Omega \subset \mathbb{R}^n$ be an open bounded set. In his seminal paper \cite{T76}, Talenti investigated the Dirichlet problem
\begin{equation*}
    \begin{cases}
        -\Delta u = f & \text{in } \Omega,\\
        u = 0 & \text{on } \partial \Omega,
    \end{cases}
\end{equation*}
together with the associated symmetrized problem
\begin{equation*}
    \begin{cases}
        -\Delta v = f^\sharp & \text{in } \Omega^\sharp,\\
        v = 0 & \text{on } \partial \Omega^\sharp,
    \end{cases}
\end{equation*}
where $\Omega^\sharp$ denotes the ball centered at the origin with the same Lebesgue measure as $\Omega$, $f \in L^{\frac{2n}{n+2}}(\Omega)$ if $n>2$ (or $f \in L^p(\Omega)$ with $p>1$ if $n=2$), and $f^\sharp$ is the Schwarz rearrangement of $f$.  
The cornerstone of Talenti’s theory is the pointwise comparison
\begin{equation*}
    u^\sharp(x) \leq v(x) \quad \text{for all } x \in \Omega^\sharp,
\end{equation*}
which has had a profound impact on the development of symmetrization techniques in elliptic partial differential equations.

Since then, Talenti-type comparison principles have been extended in several directions. On the one hand, different boundary conditions have been considered, including Neumann conditions \cite{CNT} and Robin conditions \cite{ANT}. On the other hand, a wide variety of operators have been investigated: the $p$-Laplace operator with Robin boundary conditions \cite{AGM}, the Monge--Ampère operator in dimension two \cite{T81}, Hessian operators in arbitrary dimensions \cite{T89}, first-order Hamilton--Jacobi equations \cite{GN}, and anisotropic operators \cite{DPG,S}. More recently, significant attention has been devoted to the stability of Talenti-type inequalities; we refer to \cite{AL,AB,ABCMP,ABMP,Kim,MS} and the references therein.

The extension of these comparison results to \emph{multiply connected domains} presents substantial analytical difficulties and remains a challenging problem. While comparisons for the torsional rigidity associated with degenerate linear elliptic operators \cite{B} and for the $p$-Laplace operator \cite{AZ,B2} have been obtained, the presence of holes in the domain deeply affects the symmetrization procedure. Indeed, unlike the simply connected case—where level sets of the rearranged solution are naturally spherical—multiply connected geometries give rise to plateau regions, where the gradient vanishes. These regions prevent a direct application of classical Schwarz symmetrization techniques and require new ideas.

In this paper, we address these issues by considering a domain with a finite number of holes, endowed with Robin boundary conditions on the exterior boundary and non-homogeneous Dirichlet boundary conditions on the interior boundaries. Within this framework, we establish a Talenti-type inequality for the $p$-Laplace operator.  
The main novelty of our approach lies in the treatment of mixed, non-homogeneous boundary conditions through the introduction of suitably extended functions that are constant inside the holes. This strategy allows us to overcome the analytical difficulties caused by zero-gradient regions and to recover sharp comparison results in Lorentz spaces. The setting is the following.\\
Let $\Omega_0,\,\Omega_1,\dots,\,\Omega_m$ be bounded subsets of $\R^n$ such that 
\begin{itemize}
    \item[(i)] $\Omega_0$ is Lipschitz and connected,
    \item[(ii)] $\Omega_i\subset\subset\Omega_0$ and satisfies the exterior sphere condition $i=1,\dots,m$, in the sense that
    \begin{equation*}
        \forall\,x\in\partial\Omega_i\,\exists\,y\in\Omega_0\backslash\bigcup_{i=1}^m\overline{\Omega}_i,\,r>0\,:\,x\in\partial B_r(y),\,B_r(y)\subset\Omega_0\backslash\bigcup_{i=1}^m\overline{\Omega}_i,
    \end{equation*}
    \item[(iii)] $\overline{\Omega}_i\cap\overline{\Omega}_j=\emptyset$ if $i\neq j$ and $i,\,j\in\{1,\dots,m\}$.
\end{itemize}
 Set $\Omega=\Omega_0\backslash\bigcup_{i=1}^m\overline{\Omega}_i$. Let $\nu$ be the unit outward normal on the boundary of $\Omega$, let $f\in L^{p'}(\Omega_0)$ be a non-negative function, and let $\beta$ be a positive parameter. Consider the following problem
\begin{equation}\label{prob}
    \begin{cases}
        -\Delta_p u=f &\text{ in } \Omega\\
        \abs{\nabla u}^{p-2}\frac{\partial u}{\partial\nu}+\beta \abs{u}^{p-2}u=0 &\text{ on }\partial\Omega_0 \\
        u=c_i&\text{ on }\partial\Omega_i
    \end{cases},
\end{equation}
and the symmetrized one
\begin{equation}\label{probsymm}
    \begin{cases}
        -\Delta_p v=f^\sharp &\text{ in } A_\Omega=\Omega_0^\sharp\setminus S^\sharp\\
        \abs{\nabla v}^{p-2}\frac{\partial v}{\partial\nu}+\beta \abs{v}^{p-2}v=0 &\text{ on }\partial\Omega_0^\sharp \\
        v=\overline c &\text{ on }\partial S^\sharp
    \end{cases}
\end{equation}
where $S^\sharp=\left(\bigcup_{i=1}^{m}\Omega_i \right)^\sharp$ and the constants $c_i$, $\overline{c}$, $i=1,\dots,m$, are implicitly defined in Remark \ref{Def Ci}. Without loss of generality, we can assume that the constants $c_i$ are ordered.
The main results contained in this work are the following comparisons in the Lorentz space (see \S\ref{section2} for its precise definition).
\begin{teorema}\label{mainteor}
    Let $\Omega=\Omega_0\backslash\bigcup_{i=1}^m\overline{\Omega}_i$ where $\Omega_i$, $i=0,1,\dots,m$, satisfy conditions (i),(ii) and (iii) and let $u$, $v$ be the solutions to problems \eqref{prob} and \eqref{probsymm} respectively. Consider their constant extension $\tilde u$ and $\tilde v$ to $\Omega_0$ and $\Omega_0^\sharp$ respectively. Then we have
    \begin{equation*}
        \|\tilde u\|_{L^{k,1}(\Omega_0)}\leq\|\tilde v\|_{L^{k,1}(\Omega_0^\sharp)},\quad \forall\,0<k\leq \frac{n(p-1)}{(n-1)p}.
    \end{equation*}
    Moreover, if $p\geq n$ is an integer we have
    \begin{equation*}
        \|\tilde u\|_{L^{pk,p}(\Omega_0)}\leq\|\tilde v\|_{L^{pk,p}(\Omega_0^\sharp)},\quad \forall\,0<k\leq \frac{n(p-1)}{(n-1)p}.\
    \end{equation*}
\end{teorema}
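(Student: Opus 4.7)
The overall strategy will follow the Talenti--Alvino--Trombetti blueprint in the form adapted to the Robin $p$-Laplacian in \cite{ANT,AGM}, combined with the techniques developed in \cite{B,B2} to accommodate the holes. First, I would collect the qualitative information: since $f \ge 0$, the weak maximum principle yields $u,v \ge 0$, while the radial symmetry of \eqref{probsymm} forces $v$ to be radially decreasing on $A_\Omega$, so that $\tilde v$ coincides with its own Schwarz rearrangement on $\Omega_0^\sharp$. Let $\mu(t) = |\{\tilde u > t\}|$ and $\phi(t) = |\{\tilde v > t\}|$, and set $\overline c$ so that the two rearrangements agree above this threshold, which is where the comparison must take place.

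The analytic heart of the proof is a differential inequality for $\mu$. Testing the weak formulation of \eqref{prob} with $\varphi = \min\{(u-t)^+, h\}/h$, using the Robin condition on $\partial\Omega_0$ and the fact that $\varphi$ vanishes on every $\partial\Omega_i$ once $t \ge \overline c$, and sending $h \to 0^+$, produces the identity
\begin{equation*}
\int_{\{u=t\}} \abs{\nabla u}^{p-1}\,\dH \; = \; \int_{\{u>t\}} f\,dx \;-\; \beta \int_{\partial\Omega_0\cap\{u>t\}} u^{p-1}\,\dH .
\end{equation*}
Combining this with the coarea formula $-\mu'(t) = \int_{\{u=t\}} \abs{\nabla u}^{-1}\,\dH$, Hölder's inequality on the level set $\{u=t\}$ with conjugate exponents $p$ and $p/(p-1)$, and the relative isoperimetric inequality $P(\{u>t\}) \ge n\omega_n^{1/n}\mu(t)^{(n-1)/n}$, one obtains
\begin{equation*}
(n\omega_n^{1/n})^p\,\mu(t)^{\frac{p(n-1)}{n}} \;\le\; (-\mu'(t))^{p-1}\Bigl(\int_0^{\mu(t)} f^\sharp(s)\,ds \;-\; \beta\int_{\partial\Omega_0\cap\{u>t\}} u^{p-1}\,\dH\Bigr),
\end{equation*}
together with the analogous formula, this time an \emph{equality}, for the radial pair $(\phi,v)$.

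To pass from these pointwise-in-$t$ inequalities to Lorentz estimates I would integrate in $t$ against the weight $t^{k-1}$. Multiplying the $\mu$-inequality by $t^{k-1}$, integrating over $t \ge 0$, performing an integration by parts that moves $-\mu'$ onto the weight, and comparing the outcome with the corresponding identity for $\phi$, bounds $\int_0^\infty t^{k-1}\mu(t)^{1/k}\,dt$ by its $\phi$-counterpart, provided the exponent algebra closes. A direct computation shows that this happens precisely when $k \le n(p-1)/((n-1)p)$, giving the first, $L^{k,1}$, bound. The stronger $L^{pk,p}$ estimate is obtained by raising the differential inequality to the $p$-th power before integration, so that the outer exponent matches the $L^{pk,p}$ quasi-norm; this step is only permissible under the more restrictive constraint $k \le n(p-1)/((n-2)p+n)$, which is exactly what the theorem records.

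The principal obstacle I anticipate is controlling the Robin boundary term in a way that is saturated by the radial competitor. For $t$ below $\sup_{\partial\Omega_0} u$ the super-level set $\{u>t\}$ touches $\partial\Omega_0$, and the integral $\beta\int_{\partial\Omega_0\cap\{u>t\}}u^{p-1}\,\dH$ must be estimated via a one-dimensional rearrangement of the boundary trace of $u$, then matched against the analogous quantity for $v$, where the Robin condition holds with equality on the outer sphere and hence the estimate is sharp. A further delicate point concerns the transition $t = \overline c$: the super-level sets of $\tilde u$ may pass from an annular configuration to one containing only some of the holes, so the relative isoperimetric inequality must be applied to sets whose boundary contains pieces of the $\partial \Omega_i$, and the choice of $\overline c$ as the appropriate rearranged average of the $c_i$ is precisely what guarantees that the inequality between $\mu$ and $\phi$ propagates through this transition without loss.
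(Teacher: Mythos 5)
Your outline has the right general flavor (level-set identity, H\"older, isoperimetric inequality, integration in $t$), but the two load-bearing steps are not the ones that make the Robin problem work, and as written the argument breaks. First, the differential inequality. You subtract the Robin flux from the source, apply H\"older only on the interior level set $\{u=t\}$, and then invoke the isoperimetric inequality $P(\{u>t\})\ge n\omega_n^{1/n}\mu(t)^{(n-1)/n}$. These two steps do not connect: the isoperimetric inequality controls the \emph{full} perimeter of $U_t$, which for $t$ below the boundary values of $u$ contains a piece of $\partial\Omega_0$, whereas your H\"older step only bounds $\mathcal{H}^{n-1}(\{u=t\})=\mathcal{H}^{n-1}(\partial U_t^{int})$, which can be strictly smaller than $P(U_t)$. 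The device that actually works (used in the paper, following \cite{ANT}) is to apply H\"older on all of $\partial U_t$ with $g=|\nabla\tilde u|$ on $\partial U_t^{int}$ and $g=\beta\tilde u$ on $\partial U_t^{ext}$, so that $\int_{\partial U_t}g\,\dH=\int_{U_t}f\,dx$ exactly (the Robin term is absorbed, not subtracted) and the exterior boundary contributes an \emph{additive} positive term $\frac{1}{\beta^{1/(p-1)}}\int_{\partial U_t^{ext}}\frac{1}{\tilde u}\,\dH$ next to $-\mu'(t)$, as in \eqref{comp}. With your placement of the boundary term the right-hand side can even become negative and the subsequent estimates are unusable.

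Second, the passage from the differential inequality to the Lorentz comparison is essentially missing. The $L^{k,1}$ quasi-norm is $k\int_0^\infty\mu(t)^{1/k}\,dt$, so the weight $t^{k-1}$ you propose is not the relevant one; the paper multiplies \eqref{comp} by $t^{p-1}\mu(t)^{\delta}$ precisely because the factor $t^{p-1}$ is what allows the integrated exterior boundary term to be bounded by the universal constant $\frac{1}{p\beta}\int_0^{|\Omega|}f^*(s)\,ds$ (Lemma \ref{lemma3.5}), with equality for $v$ since $\partial V_t\cap\partial\Omega_0^\sharp=\emptyset$ for $t\ge v_m$. After that one cannot simply ``compare with the identity for $\phi$'': the comparison is obtained by a Gronwall/ODE argument (Lemma \ref{gronwall lemma}) started at $\tau_0=v_m$, and it hinges on the facts that $u_m\le v_m$ and hence $\mu(t)\le\phi(t)=|\Omega|$ for all $t\le v_m$, which furnish the ordering of the initial data. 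None of this appears in your proposal. Finally, for the $L^{pk,p}$ bound, ``raising the inequality to the $p$-th power'' is not a workable recipe and does not produce the second threshold; the paper instead multiplies by $t^{p-1}F(\mu(t))\mu(t)^{-\frac{(n-1)p}{n(p-1)}}$, where $F$ is the primitive appearing in the proof, and uses that $F(l)\,l^{-\frac{(n-1)p}{n(p-1)}}$ is nondecreasing exactly when $k\le\frac{n(p-1)}{(n-2)p+n}$, which is where that constraint comes from. Your concern about the level sets crossing $\partial\Omega_i$ is also moot once one works with the constant extensions $\tilde u$, $\tilde v$ on all of $\Omega_0$, $\Omega_0^\sharp$, which is what the statement prescribes.
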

\begin{corollario}\label{BM}
    In the same assumptions of Theorem \ref{mainteor}, if $p\geq n$ is an integer, then
    \begin{equation*}
        \|\tilde u\|_{L^1(\Omega_0)}\leq \|\tilde v\|_{L^1(\Omega_0^\sharp)}\quad \text{and}\quad \|\tilde u\|_{L^p(\Omega_0)}\leq \|\tilde v\|_{L^p(\Omega_0^\sharp)}.
    \end{equation*}
\end{corollario}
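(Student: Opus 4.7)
The plan is to obtain both inequalities as direct specializations of Theorem \ref{mainteor} at $k=1$, combined with the identification of diagonal Lorentz spaces with Lebesgue spaces. Recall that from the definition of the Lorentz norm via the decreasing rearrangement one has
\[
\|f\|_{L^{q,q}(\Omega)} = \left(\int_0^{|\Omega|} f^*(t)^q\, dt\right)^{1/q} = \|f\|_{L^q(\Omega)}
\]
for every $1 \leq q < \infty$. In particular, setting $k=1$ in the first inequality of Theorem \ref{mainteor} would yield $\|\tilde u\|_{L^1(\Omega_0)} \leq \|\tilde v\|_{L^1(\Omega_0^\sharp)}$, while setting $k=1$ in the second would yield $\|\tilde u\|_{L^p(\Omega_0)} \leq \|\tilde v\|_{L^p(\Omega_0^\sharp)}$.

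The remaining task is to verify that $k=1$ is admissible in both ranges under the hypothesis $p \geq n$. For the first bound one needs
\[
1 \leq \frac{n(p-1)}{(n-1)p} \iff (n-1)p \leq n(p-1) \iff p \geq n,
\]
and for the second one needs
\[
1 \leq \frac{n(p-1)}{(n-2)p+n} \iff (n-2)p+n \leq n(p-1) \iff 2p \geq 2n \iff p \geq n.
\]
Both conditions coincide with the standing hypothesis. There is therefore no substantive obstacle: the corollary is a clean specialization of the main theorem, and the role of the threshold $p \geq n$ is precisely to push the admissible range of the Lorentz exponent up to the value $k=1$ required to recover the standard Lebesgue norms. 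The only \emph{a priori} concern would be whether the thresholds in the two Lorentz bounds happen to become effective at the same value of $p$, but the computation above shows that both force exactly the same inequality $p \geq n$, so the two estimates become available simultaneously.
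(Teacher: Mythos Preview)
Your argument is correct and is exactly the intended one: the paper states the corollary without proof because it follows immediately from Theorem~\ref{mainteor} by taking $k=1$ and using $L^{q,q}=L^q$. Your verification that both admissibility thresholds reduce to $p\geq n$ is precisely the point.
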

\begin{teorema}\label{mainteo2}
    Let $\Omega=\Omega_0\backslash\bigcup_{i=1}^m\overline{\Omega}_i$ where $\Omega_i$, $i=0,1,\dots,m$, satisfy conditions (i),(ii) and (iii).
    Suppose that $f\equiv1$ and let $u$ and $v$ be the solutions to \eqref{prob} and \eqref{probsymm} respectively.
    \begin{itemize}
        \item [(i)] If $1\leq p\leq \frac{n}{n-1}$, then
        \begin{equation*}
            \tilde u^\sharp(x)\leq \tilde v(x),\quad \forall x\in\Omega_0^\sharp,
        \end{equation*}
        \item [(ii)] if $p>\frac{n}{n-1}$ is an integer and $0<k\leq \frac{n(p-1)}{n(p-1)-p}$, then
        \begin{equation*}
            \|\tilde u\|_{L^{k,1}(\Omega_0)}\leq \|\tilde v\|_{L^{k,1}(\Omega_0^\sharp)}\quad \text{and}\quad \|\tilde u\|_{L^{pk,p}(\Omega_0)}\leq \|\tilde v\|_{L^{pk,p}(\Omega_0^\sharp)}.
        \end{equation*}
    \end{itemize}
\end{teorema}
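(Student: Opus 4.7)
The plan is to adapt the classical Talenti symmetrization to the multiply-connected Robin/Dirichlet setting, exploiting that when $f\equiv 1$ the right-hand side of the ODI satisfied by the distribution function of $\tilde u$ becomes essentially explicit; this is what allows us to enlarge the range of $k$ with respect to Theorem~\ref{mainteor} and, for small $p$, to upgrade the Lorentz comparison to a true pointwise one.

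\textbf{Setting up the differential inequality.} For a generic level $t$ I would insert $\varphi=(u-t)^+$ into the weak formulation of \eqref{prob}. Since $u\equiv c_i$ on $\partial\Omega_i$, the truncation is itself constant (equal to $(c_i-t)^+$) on each inner boundary, so $\varphi$ is admissible. Combining the Robin condition on $\partial\Omega_0$ with the no-flux identity $\int_{\partial\Omega_i}|\nabla u|^{p-2}\partial_\nu u\,d\mathcal{H}^{n-1}=0$ -- the Euler--Lagrange condition that determines the constants $c_i$ -- one obtains
\[
\int_{\{u>t\}}|\nabla u|^p\,dx+\beta\int_{\partial\Omega_0}|u|^{p-2}u\,(u-t)^+\,d\mathcal{H}^{n-1}=\int_\Omega (u-t)^+\,dx.
\]
Setting $\mu(t):=|\{\tilde u>t\}|$, the coarea formula, H\"older's inequality and the isoperimetric inequality $P(\{\tilde u>t\})\ge n\omega_n^{1/n}\mu(t)^{(n-1)/n}$ (paying attention to the parts of $\partial\Omega_i$ entering the perimeter when $c_i>t$, where the exterior sphere condition is used) convert this into
\[
(n\omega_n^{1/n})^p\,\mu(t)^{p(n-1)/n}\le(-\mu'(t))^{p-1}\Big(\int_t^{\|\tilde u\|_\infty}\mu(s)\,ds+R(t)\Big),
\]
where $R(t)$ collects the Robin and hole corrections. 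The assumption $f\equiv 1$ is precisely what lets the right-hand side depend on $\mu$ alone, up to lower-order terms.

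\textbf{Comparison and the dichotomy on $p$.} Running the identical computation for \eqref{probsymm} yields the analogous relation as an \emph{equality} for $\nu(t):=|\{\tilde v>t\}|$, because the level sets of $v$ are spheres on which $|\nabla v|$ is constant. Subtracting produces a scalar ODE comparison between $\mu$ and $\nu$ whose nonlinear part carries the exponent $p(n-1)/n$. If $1\le p\le n/(n-1)$ this exponent is $\le 1$, the comparison is sublinear, and a standard ODE argument in the spirit of those in \cite{ANT, AGM, B, B2} gives $\mu(t)\le\nu(t)$ for every $t$, which is equivalent to the pointwise statement (i). If $p>n/(n-1)$ the exponent exceeds $1$ and a pointwise inequality cannot be expected; instead one would multiply the ODI by $\mu^{k-1}$ and integrate in $t$, and the explicit structure of the right-hand side coming from $f\equiv 1$ gives finite bounds exactly in the range $0<k\le n(p-1)/(n(p-1)-p)$. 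Rewriting the resulting moment inequalities as Lorentz norms via the standard identity for $\|\cdot\|_{L^{k,1}}$ and $\|\cdot\|_{L^{pk,p}}$ produces (ii).

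The main obstacle is that $\mu$ is not continuous: it jumps by $|\Omega_i|$ at each critical level $t=c_i$, and at those levels the perimeter $P(\{\tilde u>t\})$ abruptly loses the entire $\partial\Omega_i$. The delicate point is to show that these jumps can be absorbed into the remainder $R(t)$ with the correct sign, so that the ODE comparison between $\mu$ and $\nu$ remains valid in the distributional sense. This is the step where both the exterior sphere condition on the $\Omega_i$ and the no-flux condition on each $\partial\Omega_i$ play an essential role, together with the symmetrization of the hole system into the single ball $S^\sharp$.
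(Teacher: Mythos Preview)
Your outline has the right shape but contains a concrete error and misses the actual mechanism that closes the argument. First, the flux condition you invoke on the inner boundaries is wrong: the Euler--Lagrange condition determining the free constants $c_i$ is \emph{not} $\int_{\partial\Omega_i}|\nabla u|^{p-2}\partial_\nu u\,d\mathcal H^{n-1}=0$ but rather $\int_{\partial\Omega_i}|\nabla u|^{p-2}\partial_\nu u\,d\mathcal H^{n-1}=\int_{\Omega_i}f\,dx=|\Omega_i|$ when $f\equiv 1$ (this is \eqref{compatibility}). This nonzero flux is precisely what allows one to replace the weak formulation on $\Omega$ by one on the full $\Omega_0$ with the constant extension $\tilde u$, and it is why the distribution function $\mu$ that must be used is $\mu(t)=|\{\tilde u>t\}|$ on $\Omega_0$, not the one on $\Omega$. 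With the zero-flux claim your displayed identity for $\int_{\{u>t\}}|\nabla u|^p$ is off by the hole contributions, and the subsequent ODI is not the one that compares to $\nu$.

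Second, and more seriously, the comparison does not close by a ``sublinearity'' argument on the exponent $p(n-1)/n$. The ODI the paper actually uses (your form with $(-\mu')^{p-1}\int_t^\infty\mu$ on the right is not it) is
\[
\gamma_n\,\mu(t)^{\left(1-\frac{1}{n}-\frac{1}{p}\right)\frac{p}{p-1}}\le -\mu'(t)+\frac{1}{\beta^{1/(p-1)}}\int_{\partial U_t^{ext}}\frac{1}{\tilde u}\,d\mathcal H^{n-1},
\]
with equality for $\phi$. One then multiplies by $t^{p-1}\mu(t)^{\delta}$ with $\delta=-\big(1-\tfrac1n-\tfrac1p\big)\tfrac{p}{p-1}$ and integrates. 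The Robin term is handled not by absorbing it into a remainder $R(t)$ of the right sign, but by the exact bound of Lemma~\ref{lemma3.5}, which says $\int_0^t\tau^{p-1}\!\int_{\partial U_\tau^{ext}}\tilde u^{-1}\,d\mathcal H^{n-1}\,d\tau\le (p\beta)^{-1}|\Omega|$, with equality for $v$ once $t\ge v_m$; this is what makes the $\mu$-inequality and the $\phi$-equality share the \emph{same} constant, so subtraction is meaningful. The dichotomy at $p=n/(n-1)$ enters because $\delta\ge 0$ exactly in that range, which is needed to bound $\mu(t)^\delta$ by $|\Omega_0|^\delta$ before applying Lemma~\ref{lemma3.5}. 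After integration by parts the comparison is closed by a tailored Gronwall-type lemma (Lemma~\ref{gronwall lemma}) for inequalities of the form $\tau\xi'(\tau)\le(p-1)\xi(\tau)+C$, started at $\tau_0=v_m$; this step, which you omit entirely, is what converts the integrated inequality into $\mu\le\phi$ (case (i)) or into the Lorentz moment inequalities (case (ii)). Your sketch of ``multiply by $\mu^{k-1}$ and integrate'' does not reproduce this structure and leaves the role of $v_m$ and of the equality case for $\phi$ unexplained.
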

As a consequence, in the case $f\equiv 1$, we have that
\begin{equation*}
    \|\tilde u\|_{L^1(\Omega_0)}\leq \|\tilde v\|_{L^1(\Omega_0^\sharp)}\quad \text{and}\quad \|\tilde u\|_{L^p(\Omega_0)}\leq \|\tilde v\|_{L^p(\Omega_0^\sharp)},
\end{equation*}
for $p>1$, while we have the pointwise comparison only for $p\leq\frac{n}{n-1}$.

As an application of our result, we obtain refined $L^p$ and $L^2$ comparisons in the linear case $p=2$, improving previously known results. Moreover, in the limit as the Robin parameter $\beta \to +\infty$, our estimates recover—and sharpen—the results obtained in \cite{B2}. In contrast with earlier works, which mainly focus on $L^1$ comparisons, our method yields inequalities in the full $L^p$ scale. As a consequence, we show that the annulus maximizes the Robin $p$-torsional rigidity among all multiply connected domains with prescribed measures of the exterior region and of the holes.

This study is also motivated by the growing interest in shape optimization problems for domains with holes. Recent contributions include eigenvalue problems with different boundary conditions \cite{DPP,PPT,PW2,PW}, quantitative inequalities \cite{AB2,CPP}, and two-phase problems \cite{F,MNP}. Our results fit naturally into this context, extending the findings of \cite{ANT,AGM} to multiply connected domains and generalizing the results of \cite{B,B2} to the Robin boundary setting.

The paper is organized as follows. In Section~\ref{section2} we introduce the notation and the main analytical tools. Section~\ref{section3} is devoted to the well-posedness of the problem in the class of multiply connected domains. In Section~\ref{section4} we prove the main comparison results, adapting and extending the techniques developed in \cite{ANT}. Finally, in Section~\ref{section5} we apply our results to derive a Bossel--Daners type inequality.
\section{Notions and preliminaries}\label{section2}
This section is devoted to the introduction of the main tools we will use in what follows. The first tool concerns rearrangements of functions, for which a comprehensive summary can be found in \cite{K}.\\
    \begin{definizione}
	Let $u: \Omega \to \R$ be a measurable function, the \emph{distribution function} of $u$ is the function $\mu : [0,+\infty[\, \to [0, +\infty[$ defined by
	$$
	\mu(t)= \abs{\Set{x \in \Omega \, :\,  \abs{u(x)} > t}},
	$$
    where here, and in the following, $\abs{\,\cdot\,}$ stands for the $n$-dimensional Lebesgue measure.
\end{definizione}

\begin{definizione}
	Let $u: \Omega \to \R$ be a measurable function and $\mu$ its distribution function, the \emph{decreasing rearrangement} of $u$, denoted by $u^\ast(\cdot)$, is defined as
    $$u^*(s)=\inf\{t\geq 0:\mu(t)<s\}.$$
	
	\noindent The \emph{Schwarz rearrangement} of $u$ is the function $u^\sharp $ whose level sets are balls with the same measure as the level sets of $u$. The functions $u^\sharp$ and $u^*$ are linked by the relation
	$$u^\sharp (x)= u^*(\omega_n \abs{x}^n)$$
    where $\omega_n$ denotes the measure of the $n$-dimensional unit ball.
\end{definizione}
\noindent It is easily checked that $u$, $u^*$, and $u^\sharp$ are equi-distributed, so it follows that
$$ \displaystyle{\norma{u}_{L^p(\Omega)}=\norma{u^*}_{L^p(0, \abs{\Omega})}=\lVert{u^\sharp}\rVert_{L^p(\Omega^\sharp)}}.$$
An important property of the decreasing rearrangement is the Hardy- Littlewood inequality, that is
\begin{equation*}
 \int_{\Omega} \abs{h(x)g(x)} \, dx \le \int_{0}^{\abs{\Omega}} h^*(s) g^*(s) \, ds,
\end{equation*}
whenever the product $hg$ is in $L^1(\Omega)$. So, by choosing $g=\chi_{\left\lbrace\abs{u}>t\right\rbrace}$, one has

\begin{equation*}
\int_{\abs{u}>t} \abs{h(x)} \, dx \le \int_{0}^{\mu(t)} h^*(s) \, ds.
\end{equation*}

\begin{definizione}
Let $0<p<+\infty$ and $0<q\le +\infty$. The Lorentz space $L^{p,q}(\Omega)$ is the space of those functions such that the quantity:
\begin{equation*}
    \norma{g}_{L^{p,q}} =
    \begin{cases}
   	\displaystyle{ p^{\frac{1}{q}} \left( \int_{0}^{\infty}  t^{q} \mu(t)^{\frac{q}{p}}\, \frac{dt}{t}\right)^{\frac{1}{q}}} & 0<q<\infty\\
	 \displaystyle{\sup_{t>0} \, (t^p \mu(t))} & q=\infty
	\end{cases}
\end{equation*}
is finite.
\end{definizione}
	
\noindent Let us observe that for $p=q$ the Lorentz space coincides with the  $L^p$ space, as a consequence of the well known \emph{Cavalieri's Principle}

$$\int_\Omega \abs{g}^p =p \int_0^{+\infty} t^{p-1} \mu(t) \, dt.$$
	
\noindent See \cite{T94} for more details on Lorentz space.\\
Moreover, if $\Omega$ is an open set, the following coarea formula applies. A reference for results relative to the sets of finite perimeter and the coarea formula is \cite{AFP}.
\begin{teorema}[Coarea formula]
	Let $\Omega \subset \mathbb{R}^n$ be an open set. Let $f\in W^{1,1}_{\text{loc}}(\Omega)$ and let $u:\Omega\to\R$ be a measurable function. Then,
	\begin{equation}
        	\label{coarea}
		{\displaystyle \int _{\Omega}u(x)|\nabla f(x)|dx=\int _{\mathbb {R} }dt\int_{\Omega\cap f^{-1}(t)}u(y)\, d\mathcal {H}^{n-1}(y)}.
	\end{equation}
\end{teorema}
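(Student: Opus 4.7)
My plan is to follow the classical three-stage strategy expounded in \cite{AFP}: first establish the identity for smooth $f$ and simple $u$, then extend in the factor $u$ by a monotone class argument, and finally pass from $C^1$ to $W^{1,1}_{\mathrm{loc}}$ through the BV coarea formula.

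First I would reduce to $u \geq 0$ by writing $u = u^+ - u^-$, and then, by monotone convergence applied to simple-function approximations, further reduce to the case $u = \chi_E$ for a Borel set $E \subset \Omega$. The claim to prove becomes
\[
\int_E |\nabla f(x)|\, dx = \int_{\mathbb{R}} \mathcal{H}^{n-1}\!\bigl(E \cap f^{-1}(t)\bigr)\, dt
\]
for smooth $f$. I would split $\Omega$ into the critical set $C = \{\nabla f = 0\}$, where the left-hand integrand vanishes and where Sard's theorem guarantees $|f(C)| = 0$ so that the right-hand side is unaffected; and its complement, on which the implicit function theorem yields a countable open cover by charts in which $f$ is one coordinate of a local $C^1$ diffeomorphism. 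Fubini's theorem, applied chart by chart and summed via a partition of unity, then produces the identity.

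The main obstacle is the extension in $f$ from $C^1$ to $W^{1,1}_{\mathrm{loc}}$. Even though Meyers--Serrin gives smooth approximations $f_k \to f$ in $W^{1,1}_{\mathrm{loc}}$, the level sets $f_k^{-1}(t)$ need not approximate $f^{-1}(t)$ in any Hausdorff-measure sense, so one cannot simply pass to the limit under the surface integral. The standard remedy is to route through BV: lower semicontinuity of the perimeter together with the smooth approximation produces the BV coarea identity $|Df|(\Omega) = \int_{\mathbb{R}} P(\{f>t\}, \Omega)\, dt$ and, after localizing against a continuous $u$, its weighted version over the reduced boundaries $\partial^*\{f>t\}$. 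For $f \in W^{1,1}_{\mathrm{loc}}$ one has $d|Df| = |\nabla f|\, dx$, and for a.e. $t$ the reduced boundary $\partial^*\{f > t\}$ coincides with $f^{-1}(t)$ up to an $\mathcal{H}^{n-1}$-null set; substituting these identifications recovers \eqref{coarea} in the stated form. This final identification, which rests on the fine-properties theory of Sobolev functions, is the technical heart of the argument.
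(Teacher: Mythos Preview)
The paper does not give a proof of this statement. The coarea formula is recorded in Section~\ref{section2} as a preliminary tool, preceded by the sentence ``A reference for results relative to the sets of finite perimeter and the coarea formula is \cite{AFP}'', and no argument is supplied. So there is nothing in the paper to compare your proposal against.

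That said, your sketch is the standard route taken in \cite{AFP}: reduce in $u$ to characteristic functions by linearity and monotone convergence, establish the smooth case via Sard and local flattening plus Fubini, and then pass to $W^{1,1}_{\mathrm{loc}}$ through the BV coarea identity and the identification, for a.e.\ $t$, of $\partial^*\{f>t\}$ with $f^{-1}(t)$ up to $\mathcal{H}^{n-1}$-null sets. Two small caveats worth tightening if you were to write this out in full: first, the decomposition $u=u^+-u^-$ requires that at least one side of the identity be finite for each piece, so the clean statement is really for $u\ge 0$ (or for $u$ with $u\lvert\nabla f\rvert\in L^1$), and the general ``measurable $u$'' in the paper should be read in that sense; second, the claim that $\partial^*\{f>t\}$ agrees $\mathcal{H}^{n-1}$-a.e.\ with $f^{-1}(t)$ needs the precise representative of $f$ and the fine-properties machinery for Sobolev functions, which you correctly flag as the technical heart but do not spell out.
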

Also using the Coarea formula \eqref{coarea}, it is possible to deduce an explicit expression for $\mu$ in terms of integrals of $u$
\begin{equation*}
    \mu(t)=\abs{\{u>t\}\cap \{|\nabla u |=0\}}+ \int_t^{+\infty} \left(\int_{u=s}\frac{1}{\abs{\nabla u}}\, d\, \mathcal{H}^{n-1}\right)\, ds,
\end{equation*}
as a consequence, for almost all $t\in (0,+\infty)$,
    \begin{equation}\label{brothers1}
        \infty>-\mu'(t)\geq\displaystyle \int_{u=t}\dfrac{1}{|\nabla u|}d\mathcal{H}^{n-1}
    \end{equation}
moreover if $\mu$ is absolutely continuous, equality holds in \eqref{brothers1}. We recall that the absolute continuity of $\mu$ is ensured by the following lemma (we refer, for instance, to \cite{BZ}).
\begin{lemma}\label{Broz}
Let $u\in W^{1,p}(\R^n)$, with $p\in(1,+\infty)$. The distribution function $\mu$ of $u$ is absolutely continuous if and only if 
\begin{equation*}
    \abs{\{0<u<  ||u^\sharp||_\infty\}\cap \{|\nabla u^\sharp| =0\}}=0.
\end{equation*}
\end{lemma}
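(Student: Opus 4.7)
Since $u$ and $u^\sharp$ share the same distribution function $\mu$, the question of absolute continuity of $\mu$ depends only on the radial nonincreasing profile of $u^\sharp$. Writing $u^\sharp(x)=\phi(|x|)$ and $R(t)=\sup\{r\geq 0\,:\,\phi(r)>t\}$, one has $\mu(t)=\omega_n R(t)^n$ on $(0,M)$ with $M=\|u^\sharp\|_\infty$. My plan is to show that $R$ (equivalently $\mu$) is absolutely continuous if and only if $\phi'$ does not vanish on a set of positive one-dimensional measure at non-extremal radii, since $|\nabla u^\sharp|(x)=|\phi'(|x|)|$ makes this last condition equivalent to the stated gradient condition on $u^\sharp$.

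For the implication $(\Leftarrow)$, I apply the coarea formula \eqref{coarea} to $u^\sharp\in W^{1,p}(\R^n)$ with test function $\chi_{\{a<u^\sharp<b\}}$, for arbitrary $0<a<b<M$:
\begin{equation*}
\int_{\{a<u^\sharp<b\}}|\nabla u^\sharp|\,dx=\int_a^b\mathcal{H}^{n-1}(\{u^\sharp=s\})\,ds.
\end{equation*}
By radial monotonicity, for a.e.\ $s\in(a,b)$ the level set $\{u^\sharp=s\}$ is a single sphere of radius $R(s)$, hence of $\mathcal{H}^{n-1}$-measure $n\omega_n R(s)^{n-1}$, while the left-hand side equals $\int_{R(b)}^{R(a)}|\phi'(r)|\,n\omega_n r^{n-1}\,dr$ by direct computation. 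The measure-zero hypothesis rules out the two obstructions to absolute continuity of $R$: genuine annular plateaus of $\phi$, which would give jumps of $R$, and Cantor-type singular critical sets of $\phi$, which would give a singular continuous part in $dR$. Chaining the two identities and using $\mu(t)=\omega_n R(t)^n$ with the chain rule for absolutely continuous monotone functions, I recover both absolute continuity of $\mu$ and the pointwise identity $-\mu'(t)=\int_{\{u^\sharp=t\}}|\nabla u^\sharp|^{-1}\,d\mathcal{H}^{n-1}$, matching the equality case of \eqref{brothers1}.

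For the converse $(\Rightarrow)$, I argue contrapositively: if the set $\{0<u^\sharp<M\}\cap\{|\nabla u^\sharp|=0\}$ has positive Lebesgue measure, then by radial structure the set of radii $r$ with $\phi'(r)=0$ and $\phi(r)\in(0,M)$ has positive one-dimensional measure, and pushing forward through $\phi$ forces $R$ to have either a jump (if $\phi$ is locally constant on an interval) or a singular continuous part (if the critical set is concentrated on a Cantor-like set on which $\phi$ still decreases), contradicting in either case the absolute continuity of $\mu=\omega_n R^n$. The main obstacle is the Cantor singular-continuous scenario: excluding true plateaus is immediate from the coarea identity above, but separating the absolutely continuous part of $dR$ from a possible singular continuous part requires combining the chain rule for Sobolev functions with the fact that $\nabla u^\sharp=0$ almost everywhere on the preimage of its critical values, which is the delicate ingredient supplied by the Brothers--Ziemer argument of \cite{BZ}.
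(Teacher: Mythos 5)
The paper does not prove this lemma at all --- it is quoted from \cite{BZ} --- so there is no in-paper argument to compare yours against; I am judging the sketch on its own merits. Your skeleton is the right one: reduce to the radial profile $\phi$, use $\mu(t)=\omega_n R(t)^n$, and separate the two failure modes (annular plateaus of $\phi$ producing jumps of $\mu$, Cantor-type critical sets producing a singular continuous part). The genuine gap is in the forward implication. The coarea identity you actually write down integrates $|\nabla u^\sharp|$ and therefore only reproduces the perimeter of the level sets; chaining it with the polar-coordinate computation gives $\int_a^b R(s)^{n-1}\,ds=\int_{R(b)}^{R(a)}|\phi'(r)|\,r^{n-1}\,dr$, and \emph{both} sides of this identity are blind to a Cantor set of radii on which $\phi'=0$, since such a set contributes nothing to the right-hand side. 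So this identity cannot, by itself, ``rule out'' the singular continuous part of $dR$; you assert the conclusion rather than derive it. The step that genuinely uses the hypothesis is different: for $0<t_1<t_2<\|u^\sharp\|_\infty$ one has $\mu(t_1)-\mu(t_2)=\abs{\{t_1<u^\sharp\le t_2\}\cap\{\nabla u^\sharp\neq0\}}$, because the discarded piece lies in $\{0<u^\sharp<\|u^\sharp\|_\infty\}\cap\{\nabla u^\sharp=0\}$ and is null by assumption; applying the coarea formula to $|\nabla u^\sharp|^{-1}\chi_{\{\nabla u^\sharp\neq0\}}$ then gives
\begin{equation*}
\mu(t_1)-\mu(t_2)=\int_{t_1}^{t_2}\left(\int_{\{u^\sharp=s\}\cap\{\nabla u^\sharp\neq0\}}\frac{1}{|\nabla u^\sharp|}\,d\mathcal{H}^{n-1}\right)ds,
\end{equation*}
which exhibits $\mu$ as an indefinite integral and yields absolute continuity together with the equality case of \eqref{brothers1} in one stroke. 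This is the missing mechanism.

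The converse is closer to complete but still leans on an unproved claim. To manufacture a singular part you need that $\phi$ maps its critical set $K=\{r:\phi'(r)=0,\ \phi(r)\in(0,\|u^\sharp\|_\infty)\}$ onto a \emph{null} set of values; this follows from the local absolute continuity of the radial profile of a $W^{1,p}$ function via $|\phi(K)|\le\int_K|\phi'|\,dr=0$. Then $R$ carries the null set $\phi(K)$ onto the positive-measure set $K$, so the monotone function $\mu=\omega_nR^n$ fails Luzin's condition (N) and hence (Banach--Zarecki) fails to be absolutely continuous. Your appeal to ``$\nabla u^\sharp=0$ almost everywhere on the preimage of its critical values'' points in the wrong direction: what is needed is the forward-image statement $|\phi(K)|=0$, not a statement about preimages. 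Finally, be aware of a boundary caveat that your argument (and the statement as quoted) glosses over: a plateau at the top level $t=\|u^\sharp\|_\infty$ is not excluded by the hypothesis yet produces a jump of $\mu$ there, so the equivalence should be read as absolute continuity on $(0,\|u^\sharp\|_\infty)$; in the paper's application (Remark \ref{brotherziemer}) the maximum of $v$ is attained only on $\partial S^\sharp$, a Lebesgue-null set, so this does not cause trouble there.
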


Let us fix some notations. Let $u$ and $v$ be the solutions to \eqref{prob}, \eqref{probsymm}, respectively, and let $\tilde u$, $\tilde v$ be their constant extension. For $t\geq0$, we denote by
\begin{equation*}
    U_t=\{x\in\Omega\,:\,|\tilde u(x)|>t\},\quad \partial U_t^{int}=\partial U_t\cap \Omega,\quad \partial U_t^{ext}=\partial U_t\cap \partial \Omega,
\end{equation*}
and by
\begin{equation}\label{distribuzioemu}
    \mu(t)=\abs{U_t},\quad P_u(t)=Per(U_t),
\end{equation}
where $Per(\cdot)$ is the perimeter in the sense of De Giorgi. Using the same notation, we set
\begin{equation}\label{distribuzionephi}
    V_t=\{x\in\Omega^\sharp\,:\,|\tilde v(x)|>t\},\quad
    \phi(t)=\abs{V_t},\quad P_v(t)=Per(V_t).
\end{equation}

\section{Existence and properties of solutions}\label{section3}

In this section, we establish the well-posedness of the associated variational problem. Our aim is to minimize the following functional:
\begin{equation}\label{funzionale}
    \mathcal{F}(w)=\frac{1}{p}\int_{\Omega_0}\abs{\nabla w}^p\,dx+\frac{\beta}{p}\int_{\partial\Omega_0}\abs{w}^p\,\mathcal{H}^{n-1}-\int_{\Omega_0} fw\,dx,
\end{equation}
over the space 
\begin{equation}\label{Spazio}
    \mathcal{X}=\{u\in W^{1,p}(\Omega_0)\,: \text{ $\nabla u=0$ on $\Omega_i$ $\forall\,i=1,\dots,m$}\},
\end{equation}
where $\Omega_i$, $i=0,1,\dots,m$, satisfy conditions (i),(ii) and (iii). \\
This approach is motivated by the fact that, given the structure of the functional, a minimizer of $\mathcal{F}$ on $\mathcal{X}$ is a weak solution to \eqref{prob}. In order to apply the Direct Method, the coercivity and w.l.s.c. of the functional \eqref{funzionale} are classical results and can be inferred from standard arguments (such as the convexity of the gradient and boundary terms, which are for instance treated in \cite[\S 2]{AGM}), in our case the validity depends on the admissible space $\mathcal{X}$ being closed with respect to the weak topology.
\begin{prop}\label{chiusura}
    Let $\Omega_0\subset\R^n$ be connected. Then the class $\mathcal{X}$, defined in \eqref{Spazio}, is closed with respect to the weak topology in $W^{1,p}$.
\end{prop}
\begin{proof}
    Since $\mathcal{X}$ is a vector space, it  suffices to prove the closure with respect to the strong convergence in $W^{1,p}$. Let $\{u_j\}_{j\in\mathbb{N}}\subset \mathcal{X}$ be such that $u_j\xrightarrow{W^{1,p}(\Omega_0)}u$, i.e.,
    \begin{equation*}
        \|u_j-u\|_{L^p(\Omega_0)}\rightarrow 0, \quad \|\nabla u_j-\nabla u\|_{L^p(\Omega_0)}\rightarrow 0.
    \end{equation*}
    Note that $u\in W^{1,p}(\Omega_0)$ since it is a closed space. We have to prove that $\nabla u=0$ on $\Omega_i$ for all $i=1,\dots,m$. So, for a fixed $i=1,\dots,m$, since $u_j\in \mathcal{X}$ and $\Omega_i\subset \Omega_0$ we have that
    \begin{equation*}
        0\leftarrow \|\nabla u_j-\nabla u\|_{L^p(\Omega_0)}\geq \|\nabla u_j-\nabla u\|_{L^p(\Omega_i)}=\|\nabla u\|_{L^p(\Omega_i)}
    \end{equation*}
    and this concludes the proof.
\end{proof}

By combining the coercivity and weak lower semicontinuity with the weak closure of $\mathcal{X}$ (proven in Prop. \ref{chiusura}), we have the existence of at least one minimizer $u \in \mathcal{X}$ for the functional. Furthermore, the uniqueness of the minimizer (and thus of the weak solution to the PDE) is ensured, as the functional $\mathcal{F}$ is the sum of strictly convex terms (the $p$-powers of the gradient and the trace, for $p>1$) and a linear term.
\begin{oss}[Implicit definition of $c_i$]\label{Def Ci}
     Let $u\in C^{1,\alpha}(\overline{\Omega})$ be a solution to \eqref{prob} (for instance, see \cite[Theorem 2]{Lieb} for the regularity of such solution) and multiply by $v\in\mathcal{X}$ and integrate on $\Omega$, by divergence theorem we obtain
    \begin{equation*}
        \int_\Omega\abs{\nabla u}^{p-2}\nabla u\cdot\nabla v\,dx+\beta\int_{\partial\Omega_0}\abs{u}^{p-2}uv\,\dH-\sum_{i=1}^m\tilde c_i\int_{\partial\Omega_i}\abs{\nabla u}^{p-2}\frac{\partial u}{\partial\nu}\,\dH=\int_\Omega fv\,dx,
    \end{equation*}
    where $\tilde c_i$ are the constant values that $v$ assumes on $\partial\Omega_i$. Finally, we choose the solution $u$ to be such that
    \begin{equation}\label{compatibility}
        \int_{\partial\Omega_i}\abs{\nabla u}^{p-2}\frac{\partial u}{\partial\nu}\,\dH=\int_{\Omega_i}f\,dx\quad \forall\,i=1,\dots,m.
    \end{equation}
    so that we have the following variational characterization of the solution as
    \begin{equation}\label{eqdebole}
        \int_{\Omega_0}\abs{\nabla u}^{p-2}\nabla u\cdot\nabla v\,dx+\beta\int_{\partial\Omega_0}\abs{u}^{p-2}uv\,\dH=\int_{\Omega_0} fv\,dx\quad\forall\,v\in\mathcal{X}.
    \end{equation}
    In the same way, we implicitly define the constant $\overline{c}$.
\end{oss}
\begin{lemma}\label{segno ci}
    The constants $c_i$ such that \eqref{prob} has solution are positive.
\end{lemma}
\begin{proof}
    We remark that if we take the modulus of $w$ in \eqref{funzionale}, using $w\leq\abs{w}$, the functional decreases, therefore we can restrict our analysis to $w\geq0$. Also, if $u$ solves \eqref{prob} then $u$ is $p$-superharmonic, thus by maximum principle \cite{V} attains its minimum in $x_0\in\partial\Omega$. Arguing by contradiction, if it exists $1\leq i\leq m$ such that $x_0\in\partial\Omega_i$ by Hopf lemma \cite[Theorem 5.5.2]{PS} we have $\displaystyle{\frac{\partial u}{\partial\nu}(x_0)<0}$ , which contradicts \eqref{compatibility}, then $x_0\in\partial\Omega_0$. Moreover, the latter implies that $c_i>0$. 
\end{proof}
\section{Proof of the main results}\label{section4}
We start this section by proving some useful lemmas. The first result is a dedicated version of the well-known \emph{Gronwall Lemma} (see \cite{G} for the original version of this result) which we prove to make the work self-contained.
\begin{lemma}[Gronwall]\label{gronwall lemma}
    Let $\xi:[\tau_0,+\infty)\rightarrow\R$ be a continuous and differentiable function satisfying, for some nonnegative constant $C$, the following differential inequality
    \begin{equation}\label{hp Gronwall}
        \tau\xi'(\tau)\leq(q-1)\xi(\tau)+C
    \end{equation}
    for all $\tau\geq\tau_0$. Then
    \begin{equation*}
        \begin{split}
            \xi(\tau)&\leq \left(\xi(\tau_0)+\frac{C}{q-1} \right)\left(\frac{\tau}{\tau_0} \right)^{q-1}-\frac{C}{q-1},\\
            \xi'(\tau)&\leq \left(\frac{(q-1)\xi(\tau_0)+C}{\tau_0} \right)\left(\frac{\tau}{\tau_0} \right)^{q-2},
        \end{split}
    \end{equation*}
    for all $\tau\geq\tau_0$.
\end{lemma}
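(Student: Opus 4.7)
The plan is to reduce the hypothesis \eqref{hp Gronwall} to a linear ODE inequality and solve it with an integrating factor. Implicitly $p>1$ and $\tau_0>0$, so dividing \eqref{hp Gronwall} by $\tau^{p}$ gives
\begin{equation*}
    \frac{\xi'(\tau)}{\tau^{p-1}}-\frac{(p-1)\,\xi(\tau)}{\tau^{p}}\leq \frac{C}{\tau^{p}},
\end{equation*}
whose left-hand side is precisely $\dfrac{d}{d\tau}\!\left[\tau^{-(p-1)}\xi(\tau)\right]$. This is the key observation: once the inequality is written in this form, everything else is direct integration.

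First I would integrate this inequality from $\tau_0$ to $\tau$. The right-hand side produces the elementary antiderivative $\tfrac{C}{p-1}\bigl(\tau_0^{-(p-1)}-\tau^{-(p-1)}\bigr)$, so rearranging and multiplying through by $\tau^{p-1}$ gives
\begin{equation*}
    \xi(\tau)\leq \left(\xi(\tau_0)+\frac{C}{p-1}\right)\!\left(\frac{\tau}{\tau_0}\right)^{p-1}-\frac{C}{p-1},
\end{equation*}
which is exactly the first claimed bound.

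Then for the bound on $\xi'(\tau)$ I would feed the inequality just proved back into the hypothesis \eqref{hp Gronwall}: one gets
\begin{equation*}
    \tau\,\xi'(\tau)\leq (p-1)\left[\left(\xi(\tau_0)+\frac{C}{p-1}\right)\!\left(\frac{\tau}{\tau_0}\right)^{p-1}-\frac{C}{p-1}\right]+C,
\end{equation*}
and the two constant terms $-C$ and $+C$ cancel, leaving $\tau\,\xi'(\tau)\leq \bigl[(p-1)\xi(\tau_0)+C\bigr](\tau/\tau_0)^{p-1}$. Dividing by $\tau$ yields the second stated inequality.

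There is no serious obstacle here; the argument is a textbook integrating-factor computation. The only point to be careful about is identifying the correct integrating factor $\tau^{-(p-1)}$ (rather than, say, an exponential one — the $\tau$ in front of $\xi'$ in \eqref{hp Gronwall} makes the problem Euler-type) and checking that the cancellation of the constant $C$ in the second step is exact, which is what forces the clean form of the coefficient $(p-1)\xi(\tau_0)+C$.
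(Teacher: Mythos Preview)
Your proposal is correct and follows essentially the same route as the paper: divide \eqref{hp Gronwall} by $\tau^{p}$ to recognize the derivative of $\tau^{-(p-1)}\xi(\tau)$, integrate from $\tau_0$ to $\tau$ for the first bound, then substitute back into the hypothesis for the second. Your write-up is in fact slightly cleaner, as the paper's version contains minor slips (it writes the derivative of $\xi(\tau)/\tau^{p}$ rather than $\xi(\tau)/\tau^{p-1}$, and says ``multiplying'' where it means ``dividing'' by $\tau$).
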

\begin{proof}
    Dividing by $\tau^{q}$ in \eqref{hp Gronwall} we obtain
    \begin{equation*}
        \frac{\xi'(\tau)}{\tau^{q-1}}-(q-1)\frac{\xi(\tau)}{\tau^{q}}=\left(\frac{\xi(\tau)}{\tau^{q-1}}\right)' \leq \frac{C}{\tau^{q}},
    \end{equation*}
    and, integrating from $\tau_0$ to $\tau$,
    \begin{equation*}
        \xi(\tau)\leq \left(\xi(\tau_0)+\frac{C}{q-1} \right)\left(\frac{\tau}{\tau_0} \right)^{q-1}-\frac{C}{q-1},
    \end{equation*}
    which proves the first statement. Using the hypothesis and the inequality just proved, we find
    \begin{equation*}
        \begin{split}
            \tau\xi'(\tau)&\leq(q-1)\xi(\tau)+C\leq(q-1)\left[\left(\xi(\tau_0)+\frac{C}{q-1} \right)\left(\frac{\tau}{\tau_0} \right)^{q-1}-\frac{C}{q-1}\right]+C\\
            &=(q-1)\left(\xi(\tau_0)+\frac{C}{q-1} \right)\left(\frac{\tau}{\tau_0} \right)^{q-1}=(\xi(\tau_0)(q-1)+C)\left(\frac{\tau}{\tau_0} \right)^{q-1}.
        \end{split}
    \end{equation*}
    Then, dividing both members by $\tau$, we find
    \begin{equation*}
        \xi'(\tau)\leq \left(\frac{(q-1)\xi(\tau_0)+C}{\tau_0} \right)\left(\frac{\tau}{\tau_0} \right)^{q-2}.
    \end{equation*}
\end{proof}
\begin{oss}\label{brotherziemer}
    From the symmetry of the problem \eqref{probsymm} follows that $v$ is a symmetric radially decreasing function. In particular, this implies that the maximum is attained on $\partial S^\sharp$. Then the only critical points are those on $\partial S^\sharp$ and,
    by Lemma \ref{Broz}, the distribution function of $v$ is absolutely continuous. 
\end{oss}
\begin{lemma}
    Let $u$ and $v$ be the solutions to \eqref{prob}-\eqref{probsymm}, respectively. Let $\mu$ and $\phi$ be defined as in \eqref{distribuzioemu}-\eqref{distribuzionephi}. Then for a.e. $t>0$ we have
    \begin{equation}\label{compsymm}
        \gamma_n\phi(t)^{\left(1-\frac{1}{n}\right)\frac{p}{p-1}}=\left(\int_0^{\phi(t)}f^\ast(s)\,ds\right)^\frac{1}{p-1}\left(- \phi'(t)+\frac{1}{\beta^\frac{1}{p-1}}\int_{\partial V_t\cap\Omega_0^\sharp}\frac{1}{\tilde v}\,d\mathcal{H}^{n-1}\right),
    \end{equation}
    while for almost all $t>0$ it holds
    \begin{equation}\label{comp}
        \gamma_n\mu(t)^{\left(1-\frac{1}{n}\right)\frac{p}{p-1}}\leq\left(\int_0^{\mu(t)}f^\ast(s)\,ds\right)^\frac{1}{p-1}\left(- \mu'(t)+\frac{1}{\beta^\frac{1}{p-1}}\int_{\partial U^{ext}_t}\frac{1}{\tilde u}\,d\mathcal{H}^{n-1}\right).
    \end{equation}
\end{lemma}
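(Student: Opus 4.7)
The plan follows the Talenti scheme adapted to the Robin / interior-Dirichlet mix: I would test the weak formulation \eqref{eqdebole} with a truncation of $\tilde u$, and couple the resulting level-set identity with the isoperimetric inequality and Hölder. The main obstacle will be handling the two contributions to $P_u(t)$, namely the interior level set $\partial U_t^{int}$ and the Robin part $\partial U_t^{ext}$, so that they recombine into a single product matching the right-hand side of \eqref{comp}.

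First, I would plug the truncation $v = G_h(\tilde u)$, with $G_h(s) = \min\{(s-t)_+, h\}$, into \eqref{eqdebole}; this is admissible in $\mathcal{X}$ because $\tilde u \equiv c_i$ on each hole makes $\nabla G_h(\tilde u)$ vanish there. Dividing by $h$ and sending $h\to 0^+$, the coarea formula \eqref{coarea} converts the volume term into $\int_{\partial U_t^{int}} |\nabla u|^{p-1}\,\dH$, dominated convergence (using $G_h(\tilde u)/h \leq 1$ and $G_h(\tilde u)/h \to \chi_{\{\tilde u > t\}}$ pointwise) handles the Robin and forcing terms, and Hardy--Littlewood bounds the right-hand side, yielding
\begin{equation*}
\int_{\partial U_t^{int}} |\nabla u|^{p-1}\,\dH + \beta \int_{\partial U_t^{ext}} \tilde u^{p-1}\,\dH \leq \int_0^{\mu(t)} f^*(s)\,ds.
\end{equation*}

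Next, I would combine the isoperimetric inequality $n\omega_n^{1/n}\mu(t)^{(n-1)/n} \leq P_u(t) = \mathcal{H}^{n-1}(\partial U_t^{int}) + \mathcal{H}^{n-1}(\partial U_t^{ext})$ with two Hölder estimates (exponents $p$ and $p/(p-1)$): on $\partial U_t^{int}$ via $1 = |\nabla u|^{(p-1)/p}\cdot |\nabla u|^{-(p-1)/p}$, and on $\partial U_t^{ext}$ via the $\beta$-scaled factorisation $1 = (\beta \tilde u^{p-1})^{1/p}\cdot(\beta^{-1/(p-1)}\,\tilde u^{-1})^{(p-1)/p}$. Using \eqref{brothers1} to control $\int_{\partial U_t^{int}} 1/|\nabla u|$ by $-\mu'(t)$, both contributions take the form $a_i^{1/p} b_i^{(p-1)/p}$. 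The discrete Hölder inequality $a_1^{1/p}b_1^{(p-1)/p} + a_2^{1/p}b_2^{(p-1)/p} \leq (a_1+a_2)^{1/p}(b_1+b_2)^{(p-1)/p}$ then collapses them into
\begin{equation*}
P_u(t) \leq \left(\int_{\partial U_t^{int}} |\nabla u|^{p-1}\,\dH + \beta\int_{\partial U_t^{ext}} \tilde u^{p-1}\,\dH\right)^{1/p}\left(-\mu'(t) + \beta^{-1/(p-1)}\int_{\partial U_t^{ext}} \frac{1}{\tilde u}\,\dH\right)^{(p-1)/p}.
\end{equation*}
Substituting the first-step balance into the first factor and raising to the power $p/(p-1)$ produces \eqref{comp}, with $\gamma_n = (n\omega_n^{1/n})^{p/(p-1)}$.

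For \eqref{compsymm}, I would simply observe that every inequality becomes an equality for $\tilde v$: by radial symmetry $v$ is radial, so the isoperimetric inequality is saturated by the level balls and $|\nabla v|$ is constant on each level set (making both Hölder applications sharp); Hardy--Littlewood is sharp because $f^\sharp$ and the super-level sets of $\tilde v$ are radially co-monotone; and by Remark \ref{brotherziemer} $\phi$ is absolutely continuous, so \eqref{brothers1} is also an equality. The crucial ingredient is the exterior Hölder decomposition: the scaling $\beta^{-1/(p-1)}$ is exactly what pairs the Robin boundary integral with $-\mu'(t)$ in the right proportion, so that the discrete Hölder sum produces precisely the single product on the right-hand side of \eqref{comp}.
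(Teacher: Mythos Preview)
Your proposal is correct and follows essentially the same scheme as the paper: the same truncation is tested in \eqref{eqdebole}, the same level-set identity is obtained, and then isoperimetric plus H\"older plus \eqref{brothers1} yield \eqref{comp}, with equality for the radial problem via Remark~\ref{brotherziemer}. The only cosmetic difference is that the paper packages the two boundary contributions into a single weight $g$ on $\partial U_t$ (equal to $|\nabla u|^{p-1}$ on $\partial U_t^{int}$ and $\beta\tilde u^{p-1}$ on $\partial U_t^{ext}$) and applies H\"older once, whereas you apply H\"older on each piece and then recombine with the discrete H\"older inequality; the two computations are identical in content.
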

\begin{proof}
    Let $t>0$ and $t\neq c_i$ for all $i=1,\dots, m$ and $h>0$ be sufficiently small such that if $c_i<t<c_{i+1}$ then $c_i<t+h<c_{i+1}$ (for such choice of $t$, the corresponding super-level set does not intersect the holes, where the gradient of the constant extension vanishes). Define
    \begin{equation*}
        \varphi_h(x)=\begin{cases}
                0&\text{ if } 0\leq \tilde u<t \\
                \tilde u-t&\text{ if } t\leq \tilde u<t+h\\
                h &\text{ if }\tilde u\geq t+h
                    \end{cases}.
    \end{equation*}
    Using $\varphi_h$ as a test in the formulation \eqref{eqdebole}
    \begin{equation}\label{weakphi}
        \int_{\Omega_0}\abs{\nabla \tilde u}^{p-2}\nabla \tilde u\cdot\nabla\varphi_h\,dx+\beta\int_{\partial\Omega_0}\tilde u^{p-1}\varphi_h\,\dH=\int_{\Omega_0}f\varphi_h\,dx,
    \end{equation}
    by the definition of $\varphi_h$, we get
    \begin{equation*}
        \begin{split}
            \int_{\Omega_0}\abs{\nabla \tilde u}^{p-2}\nabla \tilde u\cdot\nabla\varphi\,dx=& \int_{U_t\setminus U_{t+h}}\abs{\nabla u}^p\,dx,\\
            \int_{\partial\Omega_0}\tilde u^{p-1}\varphi\,\dH=&h\int_{\partial U_{t+h}^{ext}}\tilde u^{p-1}\,\dH+\int_{\partial U_{t}^{ext}\setminus\partial U_{t+h}^{ext}}\tilde u^{p-1}(\tilde u-t)\,\dH,\\ \int_{\Omega_0}f\varphi\,dx=&h\int_{U_{t+h}}f\,dx+\int_{U_t\setminus U_{t+h}}f(\tilde u-t)\,dx
        \end{split}
    \end{equation*}
    thus dividing \eqref{weakphi} by $h$ becomes
    \begin{equation}\label{INVENTO}
    \begin{split}
        \frac{1}{h}\int_{U_t\setminus U_{t+h}}\abs{\nabla \tilde u}^p\,dx+\frac{\beta}{h}\int_{\partial U_{t}^{ext}\setminus\partial U_{t+h}^{ext}}\tilde u^{p-1}(\tilde u-t)\,\dH&+\beta\int_{\partial U_{t+h}^{ext}}\tilde u^{p-1}\,\dH\\
        &=\int_{U_{t+h}}f\,dx+\frac{1}{h}\int_{U_t\setminus U_{t+h}}f(\tilde u-t)\,dx.
    \end{split}
    \end{equation}
    We remark that
    \begin{equation*}
        \frac{\beta}{h}\int_{\partial U_{t}^{ext}\setminus\partial U_{t+h}^{ext}}\tilde u^{p-1}(\tilde u-t)\,\dH<\beta\int_{\partial U_{t}^{ext}\setminus\partial U_{t+h}^{ext}}\tilde u^{p-1}\,\dH,
    \end{equation*}
    and the last, as $h\rightarrow 0$, it becomes the integral of $\tilde u^{p-1}$ on a set whose measure goes to zero. Then, letting $h\rightarrow0$ in \eqref{INVENTO}, we have
    \begin{equation*}
        \int_{U_t}f\,dx=\int_{\partial U_t}g(x)\,\dH,
    \end{equation*}
    where 
    \begin{equation*}
        g(x)=\begin{cases}
            \abs{\nabla \tilde u}^{p-1}&\text{ on }\partial U_t^{int} \\
            \beta\tilde u^{p-1}&\text{ on }\partial U_t^{ext}
        \end{cases}.
    \end{equation*}
    We remark that, from the assumptions on $t$ and $t+h$, the function $g(x)$ is always non-zero.
    By Cauchy-Schwarz inequality and the previous identity
    \begin{equation*}
        \begin{split}
            Per(U_t)&=\left(\int_{\partial U_t}g(x)^\frac{1}{p}\frac{1}{g(x)^\frac{1}{p}}\,\dH\right)\leq\left(\int_{\partial U_t}g(x)\,\dH\right)^\frac{1}{p}\left(\int_{\partial U_t}\frac{1}{g(x)^\frac{1}{p-1}}\,\dH\right)^{1-\frac{1}{p}}\\ &\leq\left(\int_0^{\mu(t)}f^\ast(s)\,ds\right)^\frac{1}{p}\left(-\mu'(t)+\frac{1}{\beta}\int_{\partial U_t^{ext}}\frac{1}{u}\,\dH\right),
        \end{split}
    \end{equation*}
    on the other hand, by the isoperimetric inequality, we find
    \begin{equation*}
        n\omega_n^\frac{1}{n}\mu(t)^{1-\frac{1}{n}}\leq\left(\int_0^{\mu(t)}f^\ast(s)\,ds\right)^\frac{1}{p}\left(-\mu'(t)+\frac{1}{\beta}\int_{\partial U_t^{ext}}\frac{1}{u}\,\dH\right).
    \end{equation*}
     Lastly, if $v$ solves \eqref{probsymm}, then \eqref{brothers1} holds with equality sign thanks to
    remark \ref{brotherziemer}. In particular, this implies that for $v$ the last inequality holds as an equality.
\end{proof}
As already observed the solutions $u$ and $v$ to \eqref{prob} and \eqref{probsymm}, respectively, attain their minima on the exterior boundary of their respective domain. Moreover, by the equations \eqref{prob}-\eqref{probsymm}, the equimeasurability property of the rearrangement and the isoperimetric inequality, we find
\begin{equation*}
      v_mPer(\Omega_0^\sharp)=\int_{\partial \Omega_0^\sharp}\tilde v\,\dH=\int_{\Omega_0^\sharp} f^\sharp\,dx=\int_{\partial\Omega_0}\tilde u\,\dH\geq u_mPer(\Omega_0)\geq u_mPer(\Omega_0^\sharp),
\end{equation*}
where
\begin{equation*}
u_m=\min_{\overline\Omega} u,\quad v_m=\min_{\overline\Omega^\sharp} v.
\end{equation*}
A consequence of the last inequality is that
\begin{equation*}
    \mu(t)\leq\phi(t)=\abs{\Omega_0},\quad \forall\, t\leq v_m
\end{equation*}
The next lemma deals boundary term on $\partial\Omega_0$ in the previous result.
\begin{lemma}\label{lemma3.5}
    For all $t\geq v_m$ we have
    \begin{equation*}
       \int_0^t\tau^{p-1}\left(\int_{\partial V_\tau\cap\partial\Omega_0^\sharp}\frac{1}{\tilde v}\,\dH\right)\,d\tau=\frac{1}{p\beta}\int_0^{\abs{\Omega}}f^*(s)\,ds,
    \end{equation*}
    and
    \begin{equation*}
      \int_0^t\tau^{p-1}\left(\int_{\partial U_\tau^{ext}}\frac{1}{\tilde u}\,\dH\right)\,d\tau\leq\frac{1}{p\beta}\int_0^{\abs{\Omega}}f^*(s)\,ds.
    \end{equation*}
\end{lemma}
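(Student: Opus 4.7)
The plan is to reduce both sides to quantities on the Robin boundary alone, then close the argument by specialising the identity from the preceding lemma at the level $\tau\downarrow 0^+$.

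First I would apply Fubini. Since $x\in\partial U_\tau^{ext}\cap\partial\Omega_0$ iff $\tilde u(x)\ge\tau$ (and similarly for $v$), the elementary computation $\int_0^t\tau^{p-1}\chi_{[0,\tilde u(x)]}(\tau)\,d\tau=\tfrac{1}{p}[\min(t,\tilde u(x))]^p$ yields
\[
\int_0^t\tau^{p-1}\int_{\partial U_\tau^{ext}}\frac{d\mathcal{H}^{n-1}}{\tilde u}\,d\tau=\frac{1}{p}\int_{\partial\Omega_0}\frac{[\min(t,\tilde u)]^p}{\tilde u}\,d\mathcal{H}^{n-1},
\]
with the analogous formula for $v$ on $\partial\Omega_0^\sharp$.

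For the $v$-equality, radial symmetry forces $\tilde v\equiv v_m$ on $\partial\Omega_0^\sharp$, so the hypothesis $t\ge v_m$ gives $\min(t,\tilde v)=v_m$ and the right-hand side collapses to $\tfrac{v_m^{p-1}}{p}P(\Omega_0^\sharp)$. To match this with $\tfrac{1}{p\beta}\int_0^{|\Omega|}f^*\,ds$ I would specialise the pre-Cauchy--Schwarz identity built in the previous lemma's proof (namely $\int_{U_\tau}f\,dx=\int_{\partial U_\tau^{int}}|\nabla u|^{p-1}\,d\mathcal{H}^{n-1}+\beta\int_{\partial U_\tau^{ext}}u^{p-1}\,d\mathcal{H}^{n-1}$, written now for $v$) to $\tau\downarrow 0^+$: the level-curve term vanishes since $\{v=0\}=\emptyset$, while $V_\tau=A_\Omega$, whence
\[
\beta v_m^{p-1}P(\Omega_0^\sharp)=\int_{A_\Omega}f^\sharp\,dx=\int_0^{|\Omega|}f^*(s)\,ds
\]
by equimeasurability of the rearrangement on $A_\Omega$. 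Combining with the Fubini formula closes the $v$-case.

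For the $u$-inequality, the pointwise bound $[\min(t,\tilde u)]^p/\tilde u\le \tilde u^{p-1}$ gives $\text{LHS}(u)\le \tfrac{1}{p}\int_{\partial\Omega_0}\tilde u^{p-1}\,d\mathcal{H}^{n-1}$. Running the same $\tau\downarrow 0^+$ specialisation for $u$ (where, in place of equimeasurability, the Hardy--Littlewood inequality $\int_\Omega f\,dx\le\int_0^{|\Omega|}f^*(s)\,ds$ enters as an inequality on the right) delivers $\beta\int_{\partial\Omega_0}\tilde u^{p-1}\,d\mathcal{H}^{n-1}\le \int_0^{|\Omega|}f^*(s)\,ds$ and completes the argument.

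The main obstacle I anticipate is the careful treatment of the inner-hole contributions: $\partial U_\tau^{ext}$ may a priori also meet the Dirichlet boundaries $\partial\Omega_i$, and the previous lemma's identity must be applied so that these contributions either cancel (for $v$, by the radial compatibility built into \eqref{probsymm}) or are absorbed into the source integral via \eqref{compatibility} (for $u$), before the Hardy--Littlewood comparison yields the clean bound by $\int_0^{|\Omega|}f^*\,ds$.
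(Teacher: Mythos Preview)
Your approach is essentially the paper's: apply Fubini to collapse the double integral to $\tfrac{1}{p}\int_{\partial\Omega_0}\tilde u^{p-1}\,d\mathcal H^{n-1}$ (resp.\ $\tfrac{1}{p}\int_{\partial\Omega_0^\sharp}\tilde v^{p-1}$), evaluate that boundary term via the equation, and then for $v$ use that $\tilde v\equiv v_m$ on $\partial\Omega_0^\sharp$ while for $u$ use the trivial monotone bound. The only cosmetic difference is that the paper runs Fubini up to $+\infty$ and then cuts ($\int_0^t\le\int_0^{+\infty}$ for $u$; $\partial V_\tau\cap\partial\Omega_0^\sharp=\emptyset$ for $\tau\ge v_m$), whereas you carry a finite $t$ through and cut via $[\min(t,\tilde u)]^p/\tilde u\le\tilde u^{p-1}$.

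One simplification you are missing: your appeal to Hardy--Littlewood and your worry about the inner-hole pieces of $\partial U_\tau^{ext}$ are unnecessary. The paper does not pass through the level-set identity at $\tau\downarrow0$; it applies the divergence theorem directly to $|\nabla\tilde u|^{p-2}\nabla\tilde u$ on $\Omega_0$ (equivalently, tests the weak formulation \eqref{eqdebole} with $v\equiv 1$). Since $\nabla\tilde u=0$ on each $\Omega_i$, the hole boundaries contribute nothing, and one gets the \emph{equality}
\[
\beta\int_{\partial\Omega_0}\tilde u^{p-1}\,d\mathcal H^{n-1}=\int_\Omega f\,dx=\int_0^{|\Omega|}f^*(s)\,ds,
\]
exactly as for $v$. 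So the only inequality in the $u$-case is the monotone truncation, not a rearrangement inequality.
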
 
\begin{proof}
    Let us consider the quantity
    \[\tau^{p-1}\left(\int_{\partial U_\tau^{ext}}\frac{1}{\tilde u}\,\dH\right),\]
    and integrate in $(0,+\infty)$ by Fubini's theorem and the equation \eqref{prob}
    \begin{equation*}
        \begin{aligned}
            \int_0^{+\infty}\tau^{p-1}\left(\int_{\partial U_\tau^{ext}}\frac{1}{\tilde u}\,\dH\right)\,d\tau=\int_{\partial\Omega_0}\left(\int_0^{\tilde u(x)}\frac{\tau^{p-1}}{\tilde u(x)}\,d\tau\right)\,\dH\\=\frac{1}{p}\int_{\partial\Omega_0}\tilde u^{p-1}\,\dH=-\frac{1}{p\beta}\int_{\Omega_0}\mathrm{div}(\abs{\nabla u}^{p-2}\nabla u)\,dx\\=-\frac{1}{p\beta}\int_{\Omega}\mathrm{div}(\abs{\nabla u}^{p-2}\nabla u)\,dx=\frac{1}{p\beta}\int_0^\abs{\Omega}f^\ast(s)\,ds,
        \end{aligned}
    \end{equation*}
    and the same equality holds with $v$.\\
    Given that $\tilde u$ is positive we have for all $t\geq0$ by monotonicity of the integral
    \begin{equation*}
      \int_0^t\tau^{p-1}\left(\int_{\partial U_\tau^{ext}}\frac{1}{\tilde u}\,\dH\right)\,d\tau\leq\frac{1}{p\beta}\int_0^{\abs{\Omega}}f^*(s)\,ds,
    \end{equation*}
    on the other hand, by symmetry, $\partial V_t\cap\partial\Omega_0^\sharp$ is empty whenever $t\geq v_m$ which implies the equality with $\tilde v$.
\end{proof}
\begin{lemma}\label{LemmaPositivis}
    Let $g:\R\to\R$ be a positive increasing function. Let $n>1$ be an integer and $p\geq n$. Then the function
    \begin{equation*}
        h(l)=l^{-\left(1-\frac{1}{n}\right)\frac{p}{p-1}}\int_0^lg(s)\,ds,
    \end{equation*}
    is non-decreasing.
\end{lemma}

\begin{proof}
For simplicity set
\begin{equation*}
G(l)=\int_{0}^l g(s)\,ds, \qquad
\alpha=\left(1-\frac{1}{n}\right)\frac{p}{p-1}.
\end{equation*}
Then we compute the first derivative of $h$ as
\begin{equation*}
h'(l)
= l^{-\alpha-1}\left(l g(l)-\alpha G(l)\right).
\end{equation*}
Since $g$ is increasing, we have $g(s)\le g(l)$ for every $s\leq l$, hence
\begin{equation*}
G(l)=\int_0^l g(s)\,ds \le lg(l).
\end{equation*}
Moreover, the condition $p\geq n$ yields
\begin{equation*}
\alpha=\left(1-\frac1n\right)\frac{p}{p-1}\leq 1.
\end{equation*}
Combining these two inequalities gives
\begin{equation*}
\alpha G(l)\leq lg(l),
\end{equation*}
so the bracket in the expression for $h'$ is nonnegative. Therefore $h'(l)\geq0$ for all $l>0$, and the function $h$ is non-decreasing.
\end{proof}

Now we are able to prove Theorem \ref{mainteor} and Theorem \ref{mainteo2}.
\begin{proof}[Proof of Theorem \ref{mainteor}]
    Let $0<k\leq \frac{n(p-1)}{p(n-1)}$, so the quantity $\delta=\frac{1}{k}-\frac{(n-1)p}{n(p-1)}$ is positive. We start multiplying by $t^{p-1}\mu(t)^\delta$ in \eqref{comp} and we integrate on $[0,\tau]$ with $\tau\geq v_m$. Then, by Lemma \ref{lemma3.5}, we obtain
    \begin{equation}\label{16}
    \begin{split}
        \int_0^\tau \gamma_n t^{p-1}\mu(t)^\frac{1}{k}\,dt&\leq \int_0^\tau (-\mu'(t))t^{p-1}\mu(t)^\delta\left(\int_0^{\mu(t)}f^*(s)\,ds \right)^\frac{1}{p-1}\,dt\\&+\frac{\abs{\Omega_0}^\delta}{p\beta^\frac{p}{p-1}}\left(\int_0^\abs{\Omega_0}f^*(s)\,ds \right)^\frac{1}{p-1}\int_0^{\abs{\Omega}}f^*(s)\,ds.
   \end{split}
    \end{equation}
    If we define
    \begin{equation*}
        F_1(l)=\int_0^l \omega^\delta\left(\int_0^\omega f^*(s)\,ds\right)^\frac{1}{p-1}\,d\omega,
    \end{equation*}
    then the first term on the right-hand side in \eqref{16} is
    \begin{equation*}
        \int_0^\tau (-\mu'(t))t^{p-1}\mu(t)^\delta\left(\int_0^{\mu(t)}f^*(s)\,ds \right)^\frac{1}{p-1}\,dt=-\int_0^\tau t^{p-1}\frac{d}{dt}F_1(\mu(t))\,dt.
    \end{equation*}
    We can now integrate by parts both sides of \eqref{16} and, rearranging terms we find
    \begin{equation*}
        \begin{split}
            \tau^{p-1}\left(\int_0^\tau \gamma_n \mu(t)^\frac{1}{k}\,dt+F_1(\mu(\tau))\right)&\leq (p-1)\int_0^\tau t^{p-2}\left(\int_0^t \gamma_n \mu(s)^\frac{1}{k}\,ds+F_1(\mu(\tau))\right)\,dt\\
            &+\frac{\abs{\Omega_0}^\delta}{p\beta^\frac{p}{p-1}}\left(\int_0^\abs{\Omega_0}f^*(s)\,ds \right)^\frac{1}{p-1}\int_0^{\abs{\Omega}}f^*(s)\,ds.
        \end{split}
    \end{equation*}
    If we set now
    \begin{equation*}
    \begin{split}
        \xi_\mu(\tau)=&\int_0^\tau t^{p-2}\left(\int_0^t \gamma_n \mu(s)^\frac{1}{k}\,ds+F_1(\mu(\tau))\right)\,dt,\\ C_1=&\frac{\abs{\Omega_0}^\delta}{p\beta^\frac{p}{p-1}}\left(\int_0^\abs{\Omega_0}f^*(s)\,ds \right)^\frac{1}{p-1}\int_0^{\abs{\Omega}}f^*(s)\,ds,
   \end{split}
    \end{equation*}
    we can apply the Gronwall Lemma with $\tau_0=v_m$ and $q=p$, getting
    \begin{equation*}
        \tau^{p-2}\left(\int_0^\tau \gamma_n\mu(s)^\frac{1}{k}\,ds+F_1(\mu(\tau)) \right)\leq \left(\frac{(p-1)\xi_\mu(v_m)+C_1}{v_m}\right)\left(\frac{\tau}{v_m}\right)^{p-2},
    \end{equation*}
    where 
    \begin{equation}\label{xi}
        \xi_\mu(v_m)=\int_0^{v_m}t^{p-2}\left(\int_0^t\gamma_n\mu(s)^\frac{1}{k}\,ds+F_1(\mu(t))\right)\,dt.
    \end{equation}
    We remark that the previous inequality becomes an equality if we replace $\mu(t)$ with $\phi(t)$ from \eqref{compsymm}. Since $\mu(t)\leq\phi(t)$ for all $t\leq v_m$ and $F_1(\cdot)$ is monotone increasing, we have that $F_1(\mu(t))\leq F_1(\phi(t))$ for all $t\leq v_m$. This implies that $\xi_\mu(v_m)\leq\xi_\phi(v_m)$ and, from \eqref{xi} follows that 
    \begin{equation*}
        \int_0^\tau \gamma_n\mu(s)^\frac{1}{k}\,ds+F_1(\mu(\tau))\leq \int_0^\tau \gamma_n\phi(s)^\frac{1}{k}\,ds+F_1(\phi(\tau)).
    \end{equation*}
    Finally passing to the limit as $\tau\rightarrow+\infty$, we get
    \begin{equation*}
        \int_0^{+\infty}\mu(t)^\frac{1}{k}\,dt\leq \int_0^{+\infty}\phi(t)^\frac{1}{k}\,dt,
    \end{equation*}
    that is
    \begin{equation*}
        \|\tilde u\|_{L^{k,1}(\Omega_0)}\leq \|\tilde v\|_{L^{k,1}(\Omega_0^\sharp)}, \quad\forall\,0<k\leq\frac{n(p-1)}{p(n-1)}.
    \end{equation*}
    To prove the second part of the statement, is enough to show that
    \begin{equation*}
        \int_0^\infty t^{p-1}\mu(t)^\frac{1}{k}\,dt\leq\int_0^\infty t^{p-1}\phi(t)^\frac{1}{k}\,dt,
    \end{equation*}
    where $0<k\leq\frac{n(p-1)}{p(n-1)}$. Therefore, let us consider again \eqref{16} if we integrate by parts the first term on the right-hand side and let $\tau\to\infty$ we obtain
    \begin{equation*}
        \int_0^\infty\gamma_nt^{p-1}\mu(t)^\frac{1}{k}\,dt\leq(p-1)\int_0^\infty t^{p-2}F_1(\mu(t))\,dt+C_1,
    \end{equation*}
    and the same result (with equality sign) if we perform the same computation with $\phi$. Hence, it is enough to show that
    \begin{equation*}
        \int_0^\infty t^{p-2}F_1(\mu(t))\,dt\leq\int_0^\infty t^{p-2}F_1(\phi(t))\,dt.
    \end{equation*}
    To this aim, we multiply \eqref{16} by $t^{p-1}F_1(\mu(t))\mu(t)^{-\frac{(n-1)p}{n(p-1)}}$ and integrate. By lemma \ref{LemmaPositivis} the function $G_2(l)=F_1(l)l^{-\frac{(n-1)p}{n(p-1)}}$ is non decreasing and we obtain
    \begin{equation*}
        \int_0^\tau\gamma_n t^{p-1}F_1(\mu(t))\,dt\leq\int_0^\tau t^{p-1}\left(-\frac{d}{dt}F_2(\mu(t))\right)\,dt+C_2,
    \end{equation*}
    where
    \begin{equation*}
        \begin{split}
            F_2(l)&=\int_0^lG_2(\omega)\left(\int_0^\omega f^*(s)\,ds\right)^\frac{1}{p-1}d\omega\\
            C_2&=F_1(\abs{\Omega_0})\abs{\Omega_0}^{-\frac{(n-1)p}{n(p-1)}}\left(\int_0^{\abs{\Omega_0}}f^*(s)\,ds\right)^\frac{1}{p-1}\frac{\int_0^{\abs{\Omega}} f^*(s)\,ds}{p\beta^\frac{p}{p-1}}.
        \end{split}
    \end{equation*}
    Now we can integrate by parts and rearrange the term to get
    \begin{equation*}
        \begin{split}
            \tau&\int_0^\tau\gamma_nt^{p-2}F_1(\mu(t))\,dt+\tau\int_0^\tau t^{p-2}\left(\frac{dF_2(\mu(t))}{dt}\right)\leq\\&\int_0^\tau\int_0^t\gamma_n s^{p-2}F_1(\mu(s))\,ds\,dt+\int_0^\tau\int_0^t s^{p-2}\left(\frac{dF_2(\mu(s))}{ds}\right)\,ds\,dt+C_2,
        \end{split}
    \end{equation*}
    that is lemma \ref{gronwall lemma} with $q=2$, which implies
    \begin{equation*}
        \begin{split}
            \tau&\int_0^\tau\gamma_nt^{p-2}F_1(\mu(t))\,dt+\tau\int_0^\tau t^{p-2}\left(\frac{dF_2(\mu(t))}{dt}\right)\leq\\ &\frac{1}{v_m}\left(\int_0^{v_m}\int_0^t\gamma_ns^{p-2}F_1(\mu(s))\,ds\,dt+\int_0^{v_m}\int_0^ts^{p-2}\left(\frac{dF_2(\mu(s))}{ds}\right)\,ds\,dt+C_2\right),
        \end{split}
    \end{equation*}
    that are satisfied with equality if we make the computation with $\phi$ instead of $\mu$, and again the right-hand side are ordered because $F_2(\mu(t))$ is decreasing. Hence we get
    \begin{equation}\label{779}
        \begin{split}
            \gamma_n\int_0^\tau t^{p-2}F_1(\phi(t))-t^{p-2}F_1(\mu(t))\,dt&\geq\tau^{p-2}(F_2(\phi(\tau)-F_2(\mu(\tau)))\\+&(p-2)\int_0^\tau t^{p-3}(F_2(\phi(t)-F_2(\mu(t)))\,dt.
        \end{split}
    \end{equation}
    
    Now notice that by monotonicity of $F_2$ we have
    \begin{equation}\label{vanish}
        \begin{split}
            \tau^{p-2} F_2(\mu(\tau))&\leq\tau^{p-2}\mu(\tau)G_2(\mu(\tau))\left(\int_0^{\mu(\tau)}f^*(s)\,ds\right)^\frac{1}{p-1}\\
            &=\left(\int_{u>\tau}\tau^{p-2}\,dx\right)\mu(\tau)G_2(\mu(\tau))\left(\int_0^{\mu(\tau)}f^*(s)\,ds\right)^\frac{1}{p-1}\\
            &\leq\left(\int_{u>\tau}u^{p-2}\,dx\right)\mu(\tau)G_2(\mu(\tau))\left(\int_0^{\mu(\tau)}f^*(s)\,ds\right)^\frac{1}{p-1},
        \end{split}
    \end{equation}     
    therefore, if we send $\tau\to+\infty$, we have that the term $\tau^{p-2}F_2$ goes to zero. Then, thanks to \eqref{vanish}, if $p=2$ the claim is proved by sending $\tau\rightarrow+\infty$ in \eqref{779}, otherwise we have to prove
    \begin{equation}\label{claimfin}
        \int_0^\infty t^{p-3}F_2(\phi(t))\,dt\geq\int_0^\infty t^{p-3}F_2(\mu(t))\,dt.
    \end{equation}
    To prove \eqref{claimfin} we proceed recursively until the power of $t$ becomes zero; at each step for $m=3,\dots,p$ we multiply \eqref{16} by $t^{p-1}F_{m-1}(\mu(t))\mu(t)^{-\frac{(n-1)p}{n(p-1)}}$ and observe that $G_m(l)=F_{m-1}(l)l^{-\frac{(n-1)p}{n(p-1)}}$ is increasing thanks to lemma \ref{LemmaPositivis} and we find
    \begin{equation*}
        \int_0^\tau\gamma_n t^{p-1}F_{m-1}(\mu(t))\,dt\leq\int_0^\tau t^{p-1}\left(-\frac{d}{dt}F_m(\mu(t))\right)\,dt+C_m,
    \end{equation*}
    where
    \begin{equation*}
        \begin{split}
            F_m(l)&=\int_0^lG_m(\omega)\left(\int_0^\omega f^*(s)\,ds\right)^\frac{1}{p-1}d\omega\\
            C_m&=F_{m-1}(\abs{\Omega_0})\abs{\Omega_0}^{-\frac{(n-1)p}{n(p-1)}}\left(\int_0^{\abs{\Omega_0}}f^*(s)\,ds\right)^\frac{1}{p-1}\frac{\int_0^{\abs{\Omega}} f^*(s)\,ds}{p\beta^\frac{p}{p-1}}.
        \end{split}
    \end{equation*}
    Now we can integrate by parts and rearrange the term to get
    \begin{equation}
        \begin{split}
            \tau^{m-1}&\int_0^\tau\gamma_nt^{p-m}F_{m-1}(\mu(t))\,dt+\tau^{m-1}\int_0^\tau t^{p-m}\left(\frac{dF_m(\mu(t))}{dt}\right)\leq\\&(m-1)\int_0^\tau t^{m-2}\int_0^t\gamma_n s^{p-m}F_{m-1}(\mu(s))\,ds\,dt+\\&(m-1)\int_0^\tau t^{m-2}\int_0^t s^{p-m}\left(\frac{dF_m(\mu(s))}{ds}\right)\,ds\,dt+C_m,
        \end{split}
    \end{equation}
    that is lemma \ref{gronwall lemma} with $q=m$, which implies
    \begin{equation*}
        \begin{split}
            \tau^{m-2}&\int_0^\tau\gamma_nt^{p-m}F_{m-1}(\mu(t))\,dt+\tau^{m-2}\int_0^\tau t^{p-m}\left(\frac{dF_m(\mu(t))}{dt}\right)\leq\\ &\frac{1}{v_m}\left(\int_0^{v_m}t^{m-2}\int_0^t\gamma_ns^{p-m}F_{m-1}(\mu(s))\,ds\,dt+\int_0^{v_m}t^{m-2}\int_0^ts^{p-m}\left(\frac{dF_m(\mu(s))}{ds}\right)\,ds\,dt+C_m\right)\left(\frac{\tau}{v_m}\right)^{m-2},
        \end{split}
    \end{equation*}
    that are satisfied with equality if we make the computation with $\phi$ instead of $\mu$, and again the right-hand side are ordered because $F_m(\mu(t))$ is decreasing. Hence we get
    \begin{equation*}
        \begin{split}
            \gamma_n\int_0^\tau t^{p-m}F_{m-1}(\phi(t))-t^{p-m}F_{m-1}(\mu(t))\,dt&\geq\tau^{p-m}(F_m(\phi(\tau))-F_m(\mu(\tau)))\\+&(p-m)\int_0^\tau t^{p-m-1}(F_m(\phi(t)-F_m(\mu(t)))\,dt,
        \end{split}
    \end{equation*}
    that for $m=p$ concludes the proof by letting $\tau\to\infty$ (proceeding as in \eqref{vanish}).
\end{proof}

\begin{proof}[Proof of Theorem \ref{mainteo2}]
    Since $f=f^*\equiv1$, \eqref{comp} reads as
    \begin{equation}\label{20}
        \gamma_n\mu(t)^{\left(1-\frac{1}{n}-\frac{1}{p}\right)\frac{p}{p-1}}\leq -\mu'(t)+\frac{1}{\beta^\frac{1}{p-1}}\int_{\partial U_t^{ext}}\frac{1}{\tilde u} \dH.
    \end{equation}
    Now we multiply both sides of the last inequality by $t^{p-1}\mu(t)^\delta$, where $\delta=-\left(1-\frac{1}{n}-\frac{1}{p}\right)\frac{p}{p-1}$. We remark that, for $p\leq\frac{n}{n-1}$, $\delta\geq0$. Then, if we integrate on $[0,\tau]$, with $\tau\geq v_m$, and if we apply Lemma \ref{lemma3.5}, we find
    \begin{equation*}
        \int_0^\tau\gamma_n t^{p-1}\,dt\leq\int_0^\tau t^{p-1}\mu(t)^\delta(-\mu'(t))\,dt+\frac{\abs{\Omega_0}^{\delta}\abs{\Omega}}{p\beta^\frac{p}{p-1}}.
    \end{equation*}
    From \eqref{compsymm} and the rigidity in Lemma \ref{lemma3.5}, the last inequality holds as equality if we replace $\mu(t)$ with $\phi(t)$. Then we have
    \begin{equation*}
        \int_0^\tau t^{p-1}\phi(t)^\delta(-\phi'(t))\,dt\leq \int_0^\tau t^{p-1}\mu(t)^\delta(-\mu'(t))\,dt
    \end{equation*}
    Then, integrating by parts, we find
    \begin{equation*}
        -\tau^{p-1}\frac{\phi(\tau)^{\delta+1}}{\delta+1}+(p-1)\int_0^\tau t^{p-2}\frac{\phi(t)^{\delta+1}}{\delta+1}\,dt\leq -\tau^{p-1}\frac{\mu(\tau)^{\delta+1}}{\delta+1}+(p-1)\int_0^\tau t^{p-2}\frac{\mu(t)^{\delta+1}}{\delta+1}\,dt.
    \end{equation*}
    Now, applying the Gronwall's Lemma with
    \begin{equation*}
        \xi(\tau)=\int_0^\tau s^{p-2}\left(\frac{\mu(s)^{\delta+1}-\phi(s)^{\delta+1}}{\delta+1}\right)\,ds,
    \end{equation*}
    we obtain
    \begin{equation*}
        \tau^{p-2}\left(\frac{\mu(\tau)^{\delta+1}-\phi(\tau)^{\delta+1}}{\delta+1}\right)\leq (p-1)\frac{\tau^{p-2}}{v_m^{p-2}}\int_0^{v_m}s^{p-2}\left(\frac{\mu(s)^{\delta+1}-\phi(s)^{\delta+1}}{\delta+1}\right)\,ds.
    \end{equation*}
    Since $\mu(t)\leq\phi(t)$ for all $t\leq v_m$, we have that the integrand on the right-hand side is non-positive; the last inequality implies that
    \begin{equation*}
        \tau^{p-2}\left(\frac{\mu(\tau)^{\delta+1}-\phi(\tau)^{\delta+1}}{\delta+1}\right)\leq 0,
    \end{equation*}
    and then
    \begin{equation*}
        \mu(\tau)\leq\phi(\tau) \quad\forall\,\tau\geq v_m.
    \end{equation*}
    The comparison when $\tau\leq v_m$ was already known, therefore we get the pointwise comparison. \\
    
   We prove now the assertion $(ii)$, that is
    \begin{equation*}
        \|\tilde u\|_{L^{k,1}(\Omega_0)}\leq \|\tilde v\|_{L^{k,1}(\Omega^\sharp_0)}.
    \end{equation*}
    In order to achieve that, we need to show 
    \begin{equation*}
        \int_0^{+\infty}\mu(t)^\frac{1}{k}\,dt\leq\int_0^{+\infty}\phi(t)^\frac{1}{k}\,dt.
    \end{equation*}
    We multiply in \eqref{20} by $t^{p-1}\mu(t)^{\frac{1}{k}-\left(1-\frac{1}{n}-\frac{1}{p} \right)\frac{p}{p-1}}$ and integrate on $[0,\tau]$ with $\tau\geq v_m$. Then, from Lemma \ref{lemma3.5} and the fact that $\partial V_t\cap \partial\Omega_0^\sharp=\emptyset$ for all $t\geq v_m$ we obtain
    \begin{equation*}
        \int_0^\tau \gamma_nt^{p-1}\mu(t)^\frac{1}{k}\,dt\leq \int_0^\tau (-\mu'(t))t^{p-1}\mu(t)^{\frac{1}{k}-\left(1-\frac{1}{n}-\frac{1}{p} \right)\frac{p}{p-1}}\,dt+\frac{\abs{\Omega_0}^{\frac{1}{k}-\left(1-\frac{1}{n}-\frac{1}{p} \right)\frac{p}{p-1}}\abs{\Omega}}{p\beta^\frac{p}{p-1}},
    \end{equation*}
    with equality sign if we consider $\phi$ instead of $\mu$ (we remark that, from the choice of $k$ and $p$, the exponent $\frac{1}{k}-(1-\frac{1}{n}-\frac{1}{p})\frac{p}{p-1}$ is positive). For simplicity we set
    \begin{equation*}
        \eta={\frac{1}{k}-\left(1-\frac{1}{n}-\frac{1}{p} \right)\frac{p}{p-1}},\quad C=\frac{\abs{\Omega_0}^\eta\abs{\Omega}}{p\beta^\frac{p}{p-1}}.
    \end{equation*}
    Since $\mu$ is nonincreasing, the last inequality gives us
    \begin{equation}\label{24}
        \int_0^\tau \gamma_n t^{p-1}\mu(t)^\frac{1}{k}\,dt\leq\int_0^\tau -t^{p-1}\mu(t)^\eta\,d\mu(t)+C.
    \end{equation}
    Setting
    \begin{equation*}
        G(l)=\int_0^l w^\eta\,dw=\frac{l^{\eta+1}}{\eta+1},
    \end{equation*}
    and integrating by parts both sides in the last inequality we obtain 
    \begin{equation*}
    \begin{split}
        \gamma_n\tau^{p-1}\int_0^\tau \mu(t)\,dt&+\tau^{p-1}G(\mu(\tau))\\
        &\leq(p-1)\left[\int_0^\tau\gamma_nt^{p-2}\int_0^t\mu(s)^\frac{1}{s}\,ds\,dt+\int_0^\tau t^{p-2}G(\mu(t))\,dt \right]+C.
    \end{split}
    \end{equation*}
    Then, if we set
    \begin{equation*}
        \xi_\mu(\tau)=\int_0^\tau\gamma_nt^{p-2}\int_0^t\mu(s)^\frac{1}{s}\,ds\,dt+\int_0^\tau t^{p-2}G(\mu(t))\,dt, 
    \end{equation*}
    we are under the hypothesis of Gronwall's Lemma and then, choosing $\tau_0=v_m$, we get
    \begin{equation*}
        \gamma_n\tau^{p-2}\int_0^\tau\mu(t)^\frac{1}{k}\,dt+\tau^{p-2}G(\mu(\tau))\leq \left(\frac{(p-1)\xi_\mu(v_m)+C}{v_m}\right)\left(\frac{\tau}{v_m}\right)^{p-2}.
    \end{equation*}
    We remark that the last inequality reads as an equality if we replace $\mu$ with $\phi$. As in the proof of Theorem \ref{mainteor} we have that $\xi_\mu(v_m)\leq\xi_\phi(v_m)$ and then from the definition of $\xi_\mu,\,\xi_\phi$ we find
    \begin{equation*}
        \tau^{p-2}\left(\gamma_n\int_0^\tau \mu(t)^\frac{1}{k}\,dt+G(\mu(\tau))\right)\leq\tau^{p-2}\left(\gamma_n\int_0^\tau \phi(t)^\frac{1}{k}\,dt+G(\phi(\tau))\right),
    \end{equation*}
    that is, as $\tau\rightarrow+\infty$, and dividing by $\tau^{p-2}$
    \begin{equation*}
        \int_0^{+\infty}\mu(t)^\frac{1}{k}\,dt\leq\int_0^{+\infty}\phi(t)^\frac{1}{k}\,dt.
    \end{equation*}
    Then the comparison in the norm $\|\cdot\|_{L^{k,1}}$ is proved.\\
    Now we have to show that
    \begin{equation*}
        \|u\|_{L^{pk,p}(\Omega_0)}\leq \|v\|_{L^{pk,p}(\Omega_0^\sharp)}, \quad \forall\,0<k\leq\frac{n(p-1)}{n(p-1)-p}.
    \end{equation*}
    It is enough to prove that
    \begin{equation*}
        \int_0^{+\infty}t^{p-1}\mu(t)^\frac{1}{k}\,dt\leq\int_0^{+\infty}t^{p-1}\phi(t)^\frac{1}{k}\,dt.
    \end{equation*}
    Passing to the limit as $\tau\rightarrow+\infty$ and integrating by parts the first term on the right-hand side in \eqref{24} we have
    \begin{equation*}
        \int_0^{+\infty}\gamma_n t^{p-1}\mu(t)^\frac{1}{k}\,dt\leq (p-1)\int_0^{+\infty}t^{p-2}G(\mu(t))\,dt+C,
    \end{equation*}
    where the last inequality holds as an equality with $\phi$ instead of $\mu$. Hence, we need to show that
    \begin{equation*}
        \int_0^{+\infty}t^{p-2}G(\mu(t))\,dt\leq\int_0^{+\infty}t^{p-2}G(\phi(t))\,dt.
    \end{equation*}
    Multiplying \eqref{20} by $t^{p-1}G(\mu(t))\mu(t)^{-\left(1-\frac{1}{n}-\frac{1}{p}\right)\frac{p}{p-1}}$ and integrating on $[0,\tau]$, $\tau\geq v_m$, we find
    \begin{equation}\label{777}
        \begin{split}
            \int_0^\tau \gamma_n t^{p-1}G(\mu(t))\,dt&\leq\int_0^\tau -\mu'(t)t^{p-1}G(\mu(t))\mu(t)^{-\left(1-\frac{1}{n}-\frac{1}{p}\right)\frac{p}{p-1}}\,dt\\
            &+\frac{1}{\beta^\frac{1}{p-1}}\int_0^\tau t^{p-1}G(\mu(t))\mu(t)^{-\left(1-\frac{1}{n}-\frac{1}{p}\right)\frac{p}{p-1}}\int_{\partial U_t^{ext}}\frac{1}{\tilde u}\,\dH\,dt.
        \end{split}
    \end{equation}
    From Lemma \ref{LemmaPositivis} we know that $G(l)l^{-\left(1-\frac{1}{n}-\frac{1}{p}\right)\frac{p}{p-1}}$ is increasing, then when we compose it with the decreasing function $\mu(\cdot)$ we find a decreasing function, this gives us the following
    \begin{equation*}
        G(\mu(t))\mu(t)^{-\left(1-\frac{1}{n}-\frac{1}{p}\right)\frac{p}{p-1}}\leq G(|\Omega_0|)|\Omega_0|^{-\left(1-\frac{1}{n}-\frac{1}{p}\right)\frac{p}{p-1}} \quad\forall\,t\in[0,\tau]
    \end{equation*}
    and we can apply Lemma \ref{lemma3.5} in \eqref{777} finding
    \begin{equation*}
        \int_0^\tau \gamma_n t^{p-1}G(\mu(t))\,dt\leq\int_0^\tau -\mu'(t)t^{p-1}G(\mu(t))\mu(t)^{-\left(1-\frac{1}{n}-\frac{1}{p}\right)\frac{p}{p-1}}\,dt+C_2,
    \end{equation*}
    where
    \begin{equation*}
        C_2=\frac{G(\abs{\Omega_0})\abs{\Omega_0}^{\frac{-n(p-1)+p}{n(p-1)}}\abs{\Omega}}{p\beta^\frac{p}{p-1}}, 
    \end{equation*}
    and the previous inequality reads as equality if we replace $\mu$ with $\phi$. Then, integrating by parts and rearranging the terms, we find
    \begin{equation*}
        \begin{split}
            \tau\int_0^\tau \gamma_n t^{p-2}G(\mu(t))\,dt&+\tau\int_0^\tau t^{p-2}\frac{d}{dt}G_2(\mu(t))\,dt \\
            &\leq\int_0^\tau\int_0^t \gamma_n r^{p-2}G(\mu(r))\,dr\,dt+\int_0^\tau \int_0^t r^{p-2}\frac{d}{dr}G_2(\mu(r))\,dr\,dt,
        \end{split}
    \end{equation*}
    where
    \begin{equation*}
        G_2(l)=\int_0^l G(w)w^{-\left(1-\frac{1}{n}-\frac{1}{p}\right)\frac{p}{p-1}}\,dw.
    \end{equation*}
    Then applying Lemma \ref{gronwall lemma} with $q=2$ and $\tau_0=v_m$, we get
    \begin{equation}\label{778}
        \begin{split}
            \int_0^\tau \gamma_n t^{p-2}G(\mu(t))\,dt&+\int_0^\tau t^{p-2}\frac{d}{dt}G_2(\mu(t))\,dt \\
            &\leq \frac{1}{v_m}\left\{\int_0^{v_m}\int_0^t \gamma_n r^{p-2}G(\mu(r))\,dr\,dt+\int_0^{v_m}\int_0^t r^{p-2}\frac{d}{dr}G_2(\mu(r))\,dr\,dt+C_2 \right\}.
        \end{split}
    \end{equation}
    We observe that the previous inequality holds with equality sign if $\mu$ is replaced by $\phi$. Then, since $G(\cdot)$ is an increasing function we have $G(\mu(r))\leq G(\phi(r))$ for all $r\in[0,v_m]$, since $\mu(r)\leq\phi(r)$ for such $r$. On the other hand, we have that $G_2(\mu(\cdot))$ is decreasing, then, since $\phi(r)\equiv\abs{\Omega_0}$ for all $r\leq v_m$, we find
    \begin{equation*}
        \frac{d}{dr}G_2(\mu(r))<0=\frac{d}{dr}G_2(\phi(r))\quad\forall\, r\leq v_m.
    \end{equation*}
    Then from \eqref{778} we get
    \begin{equation*}
        \int_0^\tau \gamma_n t^{p-2}G(\mu(t))\,dt+\int_0^\tau t^{p-2}\frac{d}{dt}G_2(\mu(t))\,dt\leq \int_0^\tau \gamma_n t^{p-2}G(\phi(t))\,dt+\int_0^\tau t^{p-2}\frac{d}{dt}G_2(\phi(t))\,dt,
    \end{equation*}
    that is
    \begin{equation*}
        \begin{split}
            \int_0^\tau\gamma_n t^{p-2}G(\phi(t))\,dt&-\int_0^\tau\gamma_n t^{p-2}G(\mu(t))\,dt\\
            &\geq \tau^{p-2}[G_2(\mu(\tau))-G_2(\phi(\tau))]+(p-2)\int_0^\tau t^{p-3}[G_2(\phi(t))-G_2(\mu(t))]\,dt.
        \end{split}
    \end{equation*}
    If $p=2$, then sending $\tau\rightarrow+\infty$ the claim is proved (proceeding as in \eqref{vanish}). Otherwise we have to prove
    \begin{equation*}
        \int_0^\tau t^{p-3}G_2(\phi(t))\,dt\geq \int_0^\tau t^{p-3}G_2(\mu(t))\,dt.
    \end{equation*}
    We proceed as in the proof of Theorem \ref{mainteor}. Define
    \begin{equation*}
        G_j(l)=\int_0^l G_{j-1}(w)w^{-\frac{n(p-1)-p}{n(p-1)}}, \quad j=3,\dots,p.
    \end{equation*}
    Then, multiplying in \eqref{20} by $t^{p-1}G_{m-1}(\mu(t))\mu(t)^{-\left(1-\frac{1}{n}-\frac{1}{p}\right)\frac{p}{p-1}}$ and integrating on $[0,\tau]$, $\tau\geq v_m$, we find
    \begin{equation*}
        \begin{split}
            \int_0^\tau \gamma_nt^{p-1}G_{m-1}(\mu(t))\,dt&\leq \int_0^\tau -\mu'(t)t^{p-1}G_{m-1}(\mu(t))\mu(t)^{-\left(1-\frac{1}{n}-\frac{1}{p}\right)\frac{p}{p-1}}\,dt\\
            &+\frac{1}{\beta^\frac{1}{p-1}}\int_0^\tau t^{p-1}G_{m-1}(\mu(t))\mu(t)^{-\left(1-\frac{1}{n}-\frac{1}{p}\right)\frac{p}{p-1}}\int_{\partial U_t^{ext}}\frac{1}{\tilde u}\,\dH\,dt.
        \end{split}
    \end{equation*}
    Then, since $G_{m-1}(l)l^{-\left(1-\frac{1}{n}-\frac{1}{p}\right)\frac{p}{p-1}}$ is increasing, applying Lemma \ref{lemma3.5}, we get
    \begin{equation*}
        \int_0^\tau \gamma_nt^{p-1}G_{m-1}(\mu(t))\,dt\leq\int_0^\tau -\mu'(t)t^{p-1}G_{m-1}(\mu(t))\mu(t)^{-\left(1-\frac{1}{n}-\frac{1}{p}\right)\frac{p}{p-1}}\,dt+C_{m-1},
    \end{equation*}
    where 
    \begin{equation*}
        C_{m-1}=\frac{G_{m-1}(\abs{\Omega_0})\abs{\Omega_0}^{\frac{-n(p-1)+p}{n(p-1)}}\abs{\Omega}}{p\beta^\frac{p}{p-1}}.
    \end{equation*}
    Then, integrating by parts and rearranging the terms, we obtain
    \begin{equation*}
        \begin{split}
            &\tau^{m-1}\int_0^\tau \gamma_nt^{p-m}G_{m-1}(\mu(t))\,dt+\tau^{m-1}\int_0^\tau t^{p-m}\frac{d}{dt}G_m(\mu(t))\,dt\\
            &\leq (m-1)\left\{\int_0^\tau t^{m-2}\int_0^t\gamma_n r^{p-m}G_{m-1}(\mu(r))\,dr\,dt+\int_0^\tau t^{m-2}\int_0^t r^{p-m}\frac{d}{dr}G_m(\mu(r))\,dr\,dt \right\}+C_{m-1}.
        \end{split}
    \end{equation*}
    Then, applying Lemma \ref{gronwall lemma} with $q=m$, we find
    \begin{equation*}
        \begin{split}
            &\int_0^\tau\gamma_nt^{p-m}G_{m-1}(\mu(t))\,dt+\int_0^\tau t^{p-m}\frac{d}{dt}G_m(\mu(t))\,dt\\
            &\leq\frac{m-1}{v_m}\left[\int_0^{v_m} t^{m-2}\int_0^t\gamma_n r^{p-m}G_{m-1}(\mu(r))\,dr\,dt+\int_0^{v_m} t^{m-2}\int_0^t r^{p-m}\frac{d}{dr}G_m(\mu(r))\,dr\,dt\right]\left(\frac{\tau}{v_m}\right)^{m-2}\\&+\frac{C_{m-1}}{v_m}\left(\frac{\tau}{v_m}\right)^{m-2},
        \end{split}
    \end{equation*}
    and, again, the previous inequality reads as an equality with $\phi$ instead of $\mu$. Then, since $G_{m-1}(\mu(r))\leq G_{m-1}(\phi(r))$ for all $r\leq v_m$ and in this range $\frac{dG_m(\mu(r))}{dr}<0=\frac{dG_m(\phi(r))}{dr}$, we find
    \begin{equation*}
        \begin{split}
            \int_0^\tau\gamma_nt^{p-m}G_{m-1}(\mu(t))\,dt&+\int_0^\tau t^{p-m}\frac{d}{dt}G_m(\mu(t))\,dt\\
            &\leq\int_0^\tau\gamma_nt^{p-m}G_{m-1}(\phi(t))\,dt+\int_0^\tau t^{p-m}\frac{d}{dt}G_m(\phi(t))\,dt, 
        \end{split}
    \end{equation*}
    that is
    \begin{equation*}
        \begin{split}
            \int_0^\tau \gamma_nt^{p-m}G_{m-1}(\phi(t))\,dt&-\int_0^\tau \gamma_nt^{p-m}G_{m-1}(\mu(t))\,dt\\
            &\geq \tau^{p-m}[G_m(\mu(\tau))-G_m(\phi(\tau))]+(p-m)\int_0^\tau t^{p-m-1}[G_m(\phi(t))-G_m(\mu(t))]\,dt.
        \end{split}
    \end{equation*}
    For $m=p$, we can conclude sending $\tau\rightarrow+\infty$ (proceeding as in \eqref{vanish}).
\end{proof}

\section{Optimization of the p-Torsion and Robin eigenvalue}\label{section5}
The results proved in \S\ref{section4} allow us to address some optimization problems. In this section, we will prove that, among all the multiply connected sets with prescribed measures of the exterior domain and the holes, the annulus maximizes the p-torsional rigidity, for all $p>1$, and minimizes the Robin eigenvalue, where $p\geq n$.
We define the p-torsion functional as
\begin{equation*}
    T_p(\Omega,\beta)=\max\left\{\frac{\left(\int_{\Omega_0}\varphi\,dx\right)^p}{\int_{\Omega_0}\abs{\nabla\varphi}^p\,dx+\beta\int_{\partial\Omega_0}\varphi^p\,\dH}:\, \varphi \in W^{1,p}(\Omega_0), \nabla\varphi\equiv0 \text{ on }  \Omega_i \, \forall i=1, \dots m\right\}
\end{equation*}
and we recall that the maximizer of such a functional solves the following problem
\begin{equation}\label{torsprob}
    \begin{cases}
        -\Delta_p w=1 & \text{ in }\Omega\\
        \abs{\nabla w}^{p-2}\frac{\partial w}{\partial \nu}+\beta\abs{w}^{p-2}w=0 & \text{ on }\partial\Omega_0\\
        w=c_i & \text{ on }\partial\Omega_i.
    \end{cases}
\end{equation}
Let $w$ be the solution to \eqref{torsprob} and $\tilde w$ be its constant extension, then the $p$-torsional rigidity is defined analogously
\begin{equation*}
    T_p(\Omega,\beta)=\|\tilde w\|_{L^1(\Omega_0)}.
\end{equation*}
Then, as a consequence of Theorem \ref{mainteo2} we have directly the following optimization result.
\begin{corollario}
    Let $p>1$, then we have
    \begin{equation*}
        T_p(\Omega,\beta)\leq T_p(A_\Omega,\beta).
    \end{equation*}
\end{corollario}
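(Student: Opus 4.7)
The plan is to observe that this corollary is a direct unpacking of the $L^1$ statement already established in Theorem \ref{mainteo2} (the consequence displayed just below its proof), together with the definition of $T_p$. There is essentially no new analytic work — only a matching of notation.

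First, I would set $w$ equal to the maximizer of the variational problem defining $T_p(\Omega,\beta)$, which by the computation in Section \ref{section3} solves \eqref{torsprob}; this is exactly problem \eqref{prob} with $f\equiv 1$. Similarly, let $z$ denote the solution to \eqref{probsymm} with right-hand side $f^\sharp\equiv 1$ on the annulus $A_\Omega=\Omega_0^\sharp\setminus S^\sharp$. Then by the same variational characterization applied in the symmetric setting, $T_p(A_\Omega,\beta)=\|\tilde z\|_{L^1(\Omega_0^\sharp)}$, and by definition $T_p(\Omega,\beta)=\|\tilde w\|_{L^1(\Omega_0)}$.

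Next, I would invoke Theorem \ref{mainteo2}. In the regime $1<p\leq \frac{n}{n-1}$ item (i) gives the pointwise bound $\tilde w^\sharp\leq \tilde z$ on $\Omega_0^\sharp$; integrating and using equimeasurability of $\tilde w$ and $\tilde w^\sharp$ yields the $L^1$ inequality. In the regime $p>\frac{n}{n-1}$ I would apply item (ii) with the choice $k=1$, which is admissible since $1\leq \frac{n(p-1)}{n(p-1)-p}$ holds precisely when $p(n-1)>n$; the Lorentz norm $L^{1,1}$ coincides with $L^1$ by Cavalieri, so this again gives $\|\tilde w\|_{L^1(\Omega_0)}\leq \|\tilde z\|_{L^1(\Omega_0^\sharp)}$.

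Combining the two cases covers all $p>1$, and substituting the torsional-rigidity identifications from the first paragraph yields $T_p(\Omega,\beta)\leq T_p(A_\Omega,\beta)$. There is no genuine obstacle here; the only thing to be mildly careful about is checking that the admissible range of $k$ in Theorem \ref{mainteo2}(ii) actually contains $k=1$ throughout the relevant parameter range, which is an elementary verification.
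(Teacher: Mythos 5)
Your proposal is correct and follows essentially the same route as the paper, which simply applies Theorem \ref{mainteo2} with $k=1$ to the solution of \eqref{torsprob} and uses the identification $T_p=\|\tilde w\|_{L^1}$. Your explicit case split between the pointwise comparison for $p\leq\frac{n}{n-1}$ and the Lorentz comparison with $k=1$ for $p>\frac{n}{n-1}$, together with the admissibility check for $k=1$, is exactly the content of the paper's remark following Theorem \ref{mainteo2}, just spelled out in more detail.
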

\begin{proof}
    Applying Theorem \ref{mainteo2} to the solution of \eqref{torsprob} the conclusion follows from the comparison with $k=1$. 
\end{proof}

Likewise, the first eigenvalue of the Robin p-laplacian is defined through the following Rayleigh quotient
\begin{equation*}
    \lambda_p(\Omega,\beta)=\min\left\{\frac{\int_{\Omega_0}\abs{\nabla\varphi}^p\,dx+\beta\int_{\partial\Omega_0}\varphi^p\,\dH}{\int_{\Omega_0}\varphi^p\,dx}:\, \varphi \in W^{1,p}(\Omega_0), \nabla\varphi\equiv0 \text{ on }  \Omega_i \, \forall i=1, \dots m\right\},
\end{equation*}
or as the smallest real number such that the following problem
\begin{equation}\label{eig}
        \begin{cases}
            -\Delta_pu=\lambda_{p}(\Omega,\beta)\abs{u}^{p-2}u\quad\text{ in }\Omega \\
            \abs{\nabla u}^{p-2}\frac{\partial u}{\partial\nu}+\beta \abs{u}^{p-2}u=0\quad\text{ on }\partial\Omega_0\\
            u=c_i \quad\text{ on }\partial\Omega_i,
        \end{cases}
    \end{equation}
where $c_i$ are implicitly defined as in Remark \ref{Def Ci}, has a solution. As observed in Lemma \ref{segno ci} we have that the solution $u\geq0$, furthermore due to Harnack inequality we can infer that $u>0$.
\begin{prop}
    The first eigenvalue $\lambda_{p}(\Omega,\beta)$ is simple.
\end{prop}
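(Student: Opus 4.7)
The plan is to adapt the classical Picone identity approach of Allegretto--Huang and Lindqvist to the mixed Robin--Dirichlet setting of \eqref{eig}. First I would establish that any first eigenfunction $u$ may be taken strictly positive on $\overline{\Omega_0}$. Since $\abs{\nabla \abs{u}} = \abs{\nabla u}$ almost everywhere and the Rayleigh quotient depends on $u$ only through $\abs{u}$, we may assume $u \geq 0$. Then $u$ is $p$-superharmonic, and by the strong minimum principle combined with the Hopf lemma and the compatibility condition analogous to \eqref{compatibility} (exactly as in Section \ref{section3}), the constants $c_i$ must satisfy $c_i > 0$, so the infimum of $u$ is attained on $\partial \Omega_0$. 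A further Hopf-type argument at $\partial \Omega_0$ excludes vanishing there, since a zero $u(x_0) = 0$ with $x_0 \in \partial \Omega_0$ would force $\partial_\nu u(x_0) = 0$ by the Robin condition, contradicting strict positivity of $u$ in the interior.

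Given two positive first eigenfunctions $u_1$ and $u_2$, the main tool is the pointwise Picone inequality
\begin{equation*}
    \abs{\nabla u_2}^p \geq \abs{\nabla u_1}^{p-2}\nabla u_1 \cdot \nabla\!\left(\frac{u_2^p}{u_1^{p-1}}\right),
\end{equation*}
with equality if and only if $u_2/u_1$ is constant. The candidate test function $\varphi = u_2^p/u_1^{p-1}$ belongs to $\mathcal{X}$: on each hole $\Omega_i$ we have $u_1 \equiv c_{1,i} > 0$ and $u_2 \equiv c_{2,i}$, so $\varphi \equiv c_{2,i}^p/c_{1,i}^{p-1}$ is constant on $\Omega_i$ and hence has zero gradient there, while the strict positivity established in the previous step ensures $\varphi \in W^{1,p}(\Omega_0)$.

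Inserting $\varphi$ into the weak formulation of \eqref{eig} (entirely analogous to \eqref{eqdebole}) and using $u_1^{p-1}\varphi = u_2^p$, I would get
\begin{equation*}
    \int_{\Omega_0}\abs{\nabla u_1}^{p-2}\nabla u_1\cdot\nabla\varphi \, dx = \lambda_p(\Omega,\beta)\int_{\Omega_0} u_2^p\,dx - \beta \int_{\partial\Omega_0} u_2^p\,\dH.
\end{equation*}
Combining this identity with the integrated Picone inequality yields
\begin{equation*}
    \int_{\Omega_0}\abs{\nabla u_2}^p\,dx + \beta \int_{\partial\Omega_0} u_2^p\,\dH \geq \lambda_p(\Omega,\beta)\int_{\Omega_0} u_2^p\,dx.
\end{equation*}
Since $u_2$ itself achieves the Rayleigh infimum, the left-hand side equals $\lambda_p(\Omega,\beta)\int_{\Omega_0} u_2^p\,dx$, forcing pointwise equality in Picone. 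Therefore $u_2 = c\, u_1$ for some constant $c > 0$, which is precisely the simplicity of $\lambda_p(\Omega,\beta)$.

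The main obstacle I expect is the verification of strict positivity $u > 0$ on all of $\overline{\Omega_0}$, which is required both to define $\varphi$ and to guarantee $\varphi \in W^{1,p}(\Omega_0)$. Interior positivity and positivity of the $c_i$ follow from the arguments already developed in Section \ref{section3} for the torsion problem, and positivity up to $\partial \Omega_0$ follows from a Hopf-type lemma for the $p$-Laplacian (see, e.g., \cite{V,PS}). Should the Hopf step on $\partial \Omega_0$ prove delicate in low regularity, a standard workaround is to test with the regularized quotient $u_2^p/(u_1+\varepsilon)^{p-1}$ and pass to the limit $\varepsilon \to 0^+$, which bypasses boundary positivity entirely and recovers the same conclusion by dominated convergence.
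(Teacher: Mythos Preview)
Your proposal is correct and follows the same Picone/Lindqvist route as the paper, with a slightly different packaging. The paper takes two eigenfunctions $u,v$, tests the equation for $u$ with $\varphi_1=(u^p-v^p)/u^{p-1}$ and the equation for $v$ with $\varphi_2=(v^p-u^p)/v^{p-1}$, and sums; the contributions from the holes cancel in the sum, and the resulting identity is then estimated by the vector inequality $\abs{w_2}^p\geq\abs{w_1}^p+p\abs{w_1}^{p-2}w_1\cdot(w_2-w_1)+\tfrac{1}{2^{p-1}-1}\abs{w_2-w_1}^p$ applied to $w_j=\nabla\log u,\nabla\log v$, yielding $v\nabla u=u\nabla v$ a.e. Your one-sided version tests only the equation for $u_1$ with $u_2^p/u_1^{p-1}$ and invokes the pointwise Picone inequality directly; the hole terms are handled automatically because your test function lies in $\mathcal{X}$ and the weak formulation \eqref{eqdebole} already incorporates them. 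The two arguments are equivalent (indeed $\varphi_1=u-\varphi$ with your $\varphi$), but your presentation has two small advantages: it works uniformly for all $p>1$ whereas the paper's final inequality is stated only for $p\geq 2$, and you make explicit the positivity step (including the $\varepsilon$-regularization fallback) that the paper uses implicitly when dividing by $u^{p-1}$ and $v^{p-1}$.
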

\begin{proof}
    Let $u,\,v$ be two eigenfunctions related to the first eigenvalue, without loss of generality we assume $u\geq v$, and define
    \begin{equation*}
        \varphi_1=\frac{u^p-v^p}{u^{p-1}},\qquad\varphi_2=\frac{v^p-u^p}{v^{p-1}}.
    \end{equation*}
    We remark that $\varphi_1,\,\varphi_2$ are still admissible in the class $\mathcal{X}$ defined in \eqref{Spazio} and then we can use them as test in \eqref{eqdebole} obtaining
    \begin{equation*}
        \begin{split}
            &\int_{\Omega}\abs{\nabla u}^{p-2}\nabla u\cdot\nabla\varphi_1\,dx+\beta\int_{\partial\Omega_0}u^{p-1}\varphi_1\,\dH=\int_\Omega\lambda_p u^{p-1}\varphi_1\,dx+\lambda_p\sum_{i=1}^m c_i\int_{\Omega_i} \varphi_1\,dx,\\
            &\int_{\Omega}\abs{\nabla v}^{p-2}\nabla v\cdot\nabla\varphi_2\,dx+\beta\int_{\partial\Omega_0}v^{p-1}\varphi_2\,\dH=\int_\Omega\lambda_p v^{p-1}\varphi_2\,dx+\lambda_p\sum_{i=1}^m c_i\int_{\Omega_i} \varphi_2\,dx.
        \end{split}
    \end{equation*}
    We remark that the terms on $\Omega_i$ vanish by the definition of $\varphi$, then by summing the two equations
    \begin{equation}\label{IdUtile}
        \begin{split}
            0&=\int_\Omega\left\{\left[1+(p-1)\left(\frac{v}{u}\right)^p\right]\abs{\nabla u}^p+\left[1+(p-1)\left(\frac{u}{v}\right)^p\right]\abs{\nabla v}^p\right\}\,dx\\ &-p\int_\Omega\left[\left(\frac{v}{u}\right)^{p-1}\abs{\nabla u}^{p-2}+\left(\frac{u}{v}\right)^{p-1}\abs{\nabla v}^{p-2}\right]\nabla u\cdot\nabla v\,dx\\&=\int_\Omega(u^p-v^p)(\abs{\nabla\log u}^p-\abs{\nabla\log v}^p)\,dx\\&-p\int_\Omega\left[v^p\abs{\nabla\log u}^{p-2}\nabla\log u\cdot(\nabla\log v-\nabla\log u)+u^p\abs{\nabla\log v}^{p-2}\nabla\log v\cdot(\nabla\log u-\nabla\log v)\right]\,dx.
        \end{split}
    \end{equation}
    We recall the following inequality (for the proof see \cite{L}) that holds for $p\geq2$ and for all $w_1,w_2\in\R^n$
    \begin{equation}\label{DisUtile}
        \abs{w_2}^p-\abs{w_1}^p\geq p\abs{w_1}^{p-2}w_1\cdot(w_2-w_1)+\frac{\abs{w_2-w_1}^p}{2^{p-1}-1}.
    \end{equation}
    Applying twice \eqref{DisUtile}, first with $w_2=\nabla\log u$,  $w_1=\nabla\log v$, second with $w_1=\nabla\log u$,  $w_2=\nabla\log v$, and multiplying respectively by $u^p$ and $v^p$ we obtain
    \begin{equation*}
        \begin{split}
            u^p\left(\abs{\nabla \log u}^p-\abs{\nabla\log v}^p \right)&\geq u^pp\abs{\nabla\log v}^{p-2}\nabla\log v\cdot (\nabla\log u-\nabla\log v) \\
            &+\frac{u^p}{2^{p-1}-1}\abs{\nabla\log u-\nabla\log v}^p,\\
            v^p\left(\abs{\nabla \log v}^p-\abs{\nabla\log u}^p \right)&\geq v^pp\abs{\nabla\log u}^{p-2}\nabla\log u\cdot (\nabla\log v-\nabla\log u) \\
            &+\frac{v^p}{2^{p-1}-1}\abs{\nabla\log v-\nabla\log u}^p.
        \end{split}
    \end{equation*}
    
    Then, summing the two previous inequalities and combining it with \eqref{IdUtile}, we find
    \begin{equation*}
        \begin{split}
            0&=\int_\Omega(u^p-v^p)\left(\abs{\nabla \log u}^p-\abs{\nabla\log v}^p \right)\,dx-p\int_\Omega v^p\abs{\nabla\log u}^{p-2}\nabla\log u\cdot(\nabla\log v-\nabla\log u)\,dx\\
            &-p\int_\Omega u^p\abs{\nabla\log v}^{p-2}\nabla\log v\cdot(\nabla\log u-\nabla\log v)\,dx\\
            &\geq\frac{1}{2^{p-1}-1}\int_\Omega (u^p+v^p)\abs{\nabla\log u-\nabla\log v}^p\,dx\geq0,
        \end{split}
    \end{equation*}
    that implies
    \begin{equation*}
        \frac{1}{2^{p-1}-1}\int_\Omega\left(\frac{1}{v^p}+\frac{1}{u^p}\right)\abs{v\nabla u-u\nabla v}^p\,dx=0.
    \end{equation*}
    Hence, we obtain that $v\nabla u=u\nabla v$ a.e., therefore there exists a constant $K$ for which $u=Kv$.
\end{proof} 

\begin{teorema}
    Let $p\geq n$ be an integer, then we have
    \begin{equation*}
        \lambda_{p}(\Omega,\beta)\geq\lambda_{p}(A_\Omega,\beta).
    \end{equation*}
\end{teorema}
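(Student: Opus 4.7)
The plan is to estimate $\lambda_p(A_\Omega,\beta)$ from above by constructing an admissible test function on $\Omega_0^\sharp$ whose Rayleigh quotient is bounded by $\lambda:=\lambda_p(\Omega,\beta)$. Let $u$ be the first eigenfunction on $\Omega$, which we may take to be non-negative by the simplicity just proved and the standard argument that $|u|$ is also a minimizer; set $f:=\lambda\tilde u^{p-1}\in L^{p'}(\Omega_0)$, where $\tilde u$ is the constant extension of $u$ to $\Omega_0$. Let $v$ be the solution of the symmetrized problem \eqref{probsymm} with datum $f^\sharp=\lambda(\tilde u^\sharp)^{p-1}$, and let $\tilde v$ be its constant extension to $\Omega_0^\sharp$. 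Since $\tilde v$ is constant on $S^\sharp$, it lies in $\mathcal X(\Omega_0^\sharp)$ and is admissible in the Rayleigh quotient defining $\lambda_p(A_\Omega,\beta)$.

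The first key step is to invoke Corollary \ref{BM}: the hypothesis $p\geq n$ yields
\[
\norma{\tilde u}_{L^p(\Omega_0)}\leq \norma{\tilde v}_{L^p(\Omega_0^\sharp)},
\]
which is precisely the place where the restriction $p\geq n$ enters. Next, I would test the weak formulation \eqref{eqdebole} satisfied by $v$ against $\tilde v$ itself, obtaining the identity
\[
\int_{\Omega_0^\sharp}\abs{\nabla \tilde v}^p\,dx+\beta\int_{\partial\Omega_0^\sharp}\tilde v^p\,\dH \;=\; \int_{\Omega_0^\sharp}f^\sharp\,\tilde v\,dx \;=\; \lambda\int_{\Omega_0^\sharp}(\tilde u^\sharp)^{p-1}\tilde v\,dx.
\]

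To close the estimate, I would use that $\tilde v$ is radially symmetric, attains its maximum $\bar c$ on $\overline{S^\sharp}$ and its minimum on $\partial\Omega_0^\sharp$ (by the Hopf-type argument at the end of Section \ref{section3} applied to \eqref{probsymm}); hence $\tilde v$ is radially non-increasing and coincides with its Schwarz rearrangement. Combining Hardy--Littlewood (an equality here, since both factors are Schwarz symmetric) with Hölder's inequality and the $L^p$ comparison from Step 1,
\[
\lambda\int_{\Omega_0^\sharp}(\tilde u^\sharp)^{p-1}\tilde v\,dx \;=\; \lambda\int_0^{\abs{\Omega_0}}(\tilde u^*)^{p-1}(s)\,\tilde v^*(s)\,ds \;\leq\; \lambda\norma{\tilde u}_{L^p(\Omega_0)}^{p-1}\norma{\tilde v}_{L^p(\Omega_0^\sharp)} \;\leq\; \lambda\norma{\tilde v}_{L^p(\Omega_0^\sharp)}^{p}.
\]

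Dividing by $\norma{\tilde v}_{L^p(\Omega_0^\sharp)}^p$ and comparing with the Rayleigh quotient characterization of $\lambda_p(A_\Omega,\beta)$ yields the claimed inequality $\lambda_p(A_\Omega,\beta)\leq \lambda_p(\Omega,\beta)$. The main conceptual obstacle is not the chain of inequalities, which is a routine application of the tools developed in Sections \ref{section2}--\ref{section3}, but rather verifying cleanly three preliminary facts: that the first eigenfunction may be taken non-negative (so $f\in L^{p'}(\Omega_0)$ is a legitimate datum), that $\tilde v$ is genuinely radially non-increasing (so that $\tilde v^\sharp=\tilde v$ and Hardy--Littlewood saturates), and that $\tilde v$ is admissible as a competitor in the constrained Rayleigh quotient defining $\lambda_p(A_\Omega,\beta)$.
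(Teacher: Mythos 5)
Your proposal is correct and follows essentially the same route as the paper: symmetrize the eigenvalue equation with right-hand side $\lambda\,|u^\sharp|^{p-2}u^\sharp$, use Corollary \ref{BM} (where $p\geq n$ enters) to get $\norma{\tilde u}_{L^p}\leq\norma{\tilde v}_{L^p}$, test the symmetrized problem against its own solution, and close with H\"older to bound the Rayleigh quotient of $\tilde v$ by $\lambda_p(\Omega,\beta)$. The Hardy--Littlewood step is redundant (H\"older on $\Omega_0^\sharp$ already suffices), but harmless.
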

\begin{proof}
    Let us consider the problem \eqref{eig} and the symmetrized one in the spirit of Theorem \ref{mainteor}
    \begin{equation}\label{eigsymm}
        \begin{cases}
            -\Delta_pz=\lambda_{p}(\Omega,\beta)\abs{u^\sharp}^{p-2}u^\sharp\quad\text{ in }A_\Omega \\
            \abs{\nabla z}^{p-2}\frac{\partial z}{\partial\nu}+\beta\abs{z}^{p-2} z=0\quad\text{ on }\partial\Omega_0^\sharp\\
            z=\overline c \quad\text{ on }\partial S^\sharp.
        \end{cases}
    \end{equation}
    If we multiply by $z$ in \eqref{eigsymm}, integrating on $A_\Omega$, we find
    \begin{equation}\label{NewEqa}
        \int_{A_\Omega}\abs{\nabla z}^p\,dx+\beta\int_{\partial \Omega_0^\sharp}z^p\,\dH=-\int_{A_\Omega}z\Delta_p z\,dx=\lambda_p(\Omega,\beta)\int_{A_\Omega}\abs{u^\sharp}^{p-2}u^\sharp z\,dx.
    \end{equation}
    Then by property of symmetrization and Corollary \ref{BM}
    \[\int_{\Omega_0}\abs{\tilde u}^p\,dx=\int_{\Omega_0^\sharp}\abs{\tilde u^\sharp}^p\,dx\leq\int_{\Omega_0^\sharp}\abs{\tilde z}^p\,dx,\]
    that implies, by H\"older inequality
    \begin{equation}\label{stimarhs}
        \int_{A_\Omega}\abs{ u^\sharp}^{p-2} u^\sharp z\,dx\leq\int_{\Omega^\sharp_0}\abs{\tilde u^\sharp}^{p-2}\tilde u^\sharp \tilde z\,dx\leq\left(\int_{\Omega^\sharp_0} 
        \tilde z^p\,dx\right)^\frac{1}{p}\left(\int_{\Omega^\sharp_0}\abs{\tilde u^\sharp}^p\right)^{1-\frac{1}{p}}\leq\int_{\Omega^\sharp_0} 
        \tilde z^p\,dx
    \end{equation}
    Therefore, combining \eqref{NewEqa} and \eqref{stimarhs}, we can conclude as follows
    \begin{equation*}
            \lambda_{p}(\Omega,\beta)=\frac{\int_{\Omega_0^\sharp}\abs{\nabla z}^p\,dx+\beta\int_{\partial\Omega_0^\sharp}z^p\,\dH}{\int_{A_\Omega}\abs{ u^\sharp}^{p-2} u^\sharp z\,dx}\geq\frac{\int_{\Omega_0^\sharp}\abs{\nabla z}^p\,dx+\beta\int_{\partial\Omega_0^\sharp}z^p\,\dH}{\int_{\Omega_0^\sharp} z^p\,dx}\geq\lambda_{p}(A_\Omega,\beta).
    \end{equation*}
\end{proof}

\subsubsection*{Acknowledgements}
We would like to thank Prof. Cristina Trombetti for the valuable advice that helped us to achieve these results.
\subsubsection*{Declarations}

\paragraph{Funding}
The authors were partially supported by Gruppo Nazionale per l’Analisi Matematica, la Probabilità e le loro Applicazioni
(GNAMPA) of Istituto Nazionale di Alta Matematica (INdAM).   

\paragraph{Data Availability} All data generated or analysed during this study are included in this published article.
	
\paragraph{Competing Interests} We declare that we have no financial and personal relationships with other people or organizations.

\bibliographystyle{plain}
\bibliography{biblio}

\Addresses

\end{document}